\documentclass{amsart} %% onecolumn (second format) %% use this one for FCAA ! %%

%%% publisher's and journal definitions come here: %%%%%%%%%%%%%%%%%%%%%%%
%%% for FCAA ! %%%%%%%
   % \numberwithin{equation}{section}
\usepackage{amssymb,amsmath,amsfonts,latexsym} %%
\usepackage{mathrsfs}
\usepackage{hyperref}
   
   %%% similarly you can define other missing statements as Conjecture, etc.
   %%% how to make proofs so square to appear at ends: %%%%%%%%%%%%%%%%%%%%%
  %%
  % same as: \def\qed{\hfill$\Box$} % for \square
 %% may add \smallskip afterwards...

%%%%%%%%%%%%%% additional, if necessary %%%%%%%%%%%%%%%%%%%%%%%%%%%%%%%%%%%%
\usepackage{mathtools}
\usepackage{multirow,array}
\usepackage{graphicx}
\usepackage{subfigure}
\usepackage{epstopdf}
\usepackage{float} %%% to use [H], [ht] etc. to fix places of figures
\usepackage{color, xcolor}
\usepackage{enumitem}

\allowdisplaybreaks

%%%%%% Specific Authors Defs come here: %%%%%%%%%%

\def\cpt#1{{_{#1}^C\!D}}
\def\R{{\mathbb{R}}}
\def\N{{\mathbb{N}}}
\def\cF{{\mathcal F}}

\def\scSp{{\mathscr S}^\prime}
\def\C{{\mathbb{C}}}
\def\scS{{\mathscr S}}
\def\scSp{{\mathscr S}^\prime}
\def\cL{{\mathcal L}}
\def\scD{{\mathscr D}}
\def\cpt#1{{_{#1}^C\!D}}
\def\cA{{\mathcal A}}
\def\Z{{\mathbb{Z}}}
\def\cB{{\mathcal B}}
\def\cC{{\mathcal C}}
\def\WFF{\operatorname{WF}}
\def\jap#1{\langle {#1} \rangle}
\newcommand{\Char}{\operatorname{Char}}
\newcommand{\ta}{\tau}
\newcommand{\x}{\langle x\rangle}
\newcommand{\csi}{\langle \xi \rangle}
\newcommand{\Op}{\operatorname{Op}}
\newcommand{\supp}{\operatorname{supp}}
\newcommand{\norm}[1]{\langle#1\rangle}
\newcommand{\<}{\langle}
\renewcommand{\>}{\rangle}
\newcommand{\loc}{\operatorname{loc}}
\DeclareMathOperator*{\vspan}{span}

%%%% Authors Defs  Sect 5

%theorems, etc
\newtheorem{theorem}{Theorem}[section]
\newtheorem{proposition}[theorem]{Proposition}%[chapter]
\newtheorem{lemma}[theorem]{Lemma}%[chapter]
\newtheorem{cor}[theorem]{Corollary}%[chapter]

\theoremstyle{definition}
\newtheorem{example}[theorem]{Example}%[chapter]
%[chapter]

\theoremstyle{remark}
\newtheorem{remark}[theorem]{Remark}%[chapter]
\renewcommand{\theta}{\vartheta}

%%% ........... %%%%%%%%%%%%%%%%%%%%%%%%%%%%%%%%%%

%%%%%%%%%%%%%%%%%%%%%%%%%%%%%%%%%%%%%%%%%%%%%%%%%%

%%%%%%%%%%%%%%% begin make title %%%%%%%%%%%%%%%%%

\begin{document}

 %%% Pls. specify the kind of the article:
 %%% This is: %% ORIGINAL PAPER %%%% 

%%%% Title of article for FCAA %%%%%%%%%%%%%%%%%%%%%%%%
\title[Regularity and decay of solutions for sub-diffusion type equations]{Representation formula, regularity and decay\\of solutions for sub-diffusion type equations}

%% if too long for running head - use the text from 1st line !%

%%%%% authors:
\author[S. Coriasco, G. Girardi, S. Pilipovi\'c]{
        Sandro Coriasco %%% 1 %%% ORCID ID: 0000-0003-2212-1818 %% pls. add IF available %%
\and
        Giovanni Girardi %%% 2 %%% ORCID ID: 0000-0002-5206-6680 %%%
\and 
Stevan Pilipovi\'c
 }

%%% affiliations:
\address{
Dipartimento di Matematica ``G. Peano'', Universit\`a degli studi di Torino, Torino, Italy
}
\email{sandro.coriasco@unito.it} %%% obligatory indicate who is corresp.author
\address{
Dipartimento di Ingegneria e Scienze, Università Telematica Universitas Mercatorum, Piazza Mattei, 10 - 00186 Roma, Italy \\
}
\email{giovanni.girardi@unimercatorum.it}
\address{
Department of Mathematics and Informatics, Faculty of Sciences,
		University of Novi Sad, Trg D. Obradovi\'ca 4, 
		RS-21000 Novi Sad, Serbia\\
}
\email{pilipovic@dmi.uns.ac.rs}
%%%%%%%%%% Enter suitable key words and phrases %%% examples:
\keywords{Fractional PDEs, Sub-diffusion equations, Laplace transform, Symbolic calculus, Polynomially bounded coefficients}

\subjclass{Primary: 35R11; Secondary: 35S10,   35A17,   47G30,  35S05,  44A10,  26A33}
%%% These are examples only. Pls. use MSC 2020 for suitable topic numbers %%%%%%

%%% Dates:
%\date{Received: 1 December 2021 / Revised: 31 December 2021 / Accepted: ......}
% These dates will be entered by the Editor (EiC, V. Kiryakova) or authors/ system %

%%%%% For Production Dept.: variants of COPYRIGHT notice to appear: %%%%%%
% if Open Access option chosen, it is as:
   %% "\copyright The Author(s) 2022" % (or next year..) %
% if No Open Access, the TCA form signed to FCAA journal is used, and it appears:
   %% "\copyright Diogenes Co. Ltd. 2022" %% or 2021, or next year %%%%%
%%%%%%%%%%%%%%%%%%%%%%%%%%%%%%%%%%%%%%%%%%%%%%%%%%%%%%%%%%%%%%%%%%%%%%%%%%
%%%%%%%%%%%%%%%%%%%%%%%%%%%%%%%%%%%%%%%%%%%%%%%%%%%%%%
\begin{abstract}
{We study regularity and decay properties for the solutions of the Cauchy problem for time-fractional partial differential equations,
	with tempered initial data, belonging to suitable (weighted) Sobolev spaces,
	associated with a differential operator on space variables with polynomially bounded coefficients. 
	We obtain a representation formula for the solution, modulo time-regular functions, smooth and rapidly decreasing with respect to
	the space variables. By means of the representation formula, the (decay and smoothness) singularities of the solution of the homogeneous 
	Cauchy problem can be controlled, in terms of (global) wavefront sets of the initial data. }
\end{abstract} %%%%%%%%%%%%%
\maketitle

%%%%%%%% begin papers' body %%%%%%%%%%%%%%%%%%%%%%%%%%%%%

%%%%%%%%%%%% Section 1 %%%%%%%%%%%%%%%%%%%%%%%%%%
\section{Introduction}\label{sec:intro}
We consider the Cauchy problem for a non-homogeneous subdiffusive heat equation, namely
\begin{equation}
\label{eq:CPmain}
\begin{cases}
\partial_t^r u(t,x) + \Op(a)u(t,x)= f(t,x), \quad (t,x)\in (0,+\infty)\times \R^d,&\\
u(0,x)=u_0, \quad x\in \R^d. &
\end{cases}
\end{equation}
In \eqref{eq:CPmain}, $r$ is a positive real number in $(0,1)$ and $\partial_t^r u$ denotes the (forward) Caputo fractional derivative of order $r\in (0,1)$ with respect to the time variable $t$, with starting time $0$ (cf., for instance, \cite{Kilbas}),  defined by 
\begin{equation*}
\partial_t^r u(t,x)= \cpt0_t^{r}u(t,x)=\frac{1}{\Gamma(1-r)}\int_0^t \frac{\partial_t u(\tau,x) }{(t-\tau)^r}\,d\tau.
\end{equation*} 
Recall that, more generally, for $\nu\in(0,+\infty)\setminus\N$, $c\in\R$, %$g\in C^{[\nu]}(c,+\infty)$, 
the (forward) Caputo fractional derivative of order $\nu$ with respect to $t$, with starting time $c\in\R$, is defined by
\begin{equation}\label{eq:fracdergen}
	\cpt c_t^{\nu}g(t)=\frac{1}{\Gamma(1+[\nu]-\nu)}\int_c^t \frac{g^{([\nu]+1)}(\tau)}{(t-\tau)^{\nu-[\nu]}}\,d\tau,\quad t\in(c,+\infty).
\end{equation}
$\Op(a)$ in \eqref{eq:CPmain} is a hypo-elliptic (pseudo)differential operator with symbol $a=a(\xi)$ or $a=a(x,\xi)$. 
We postpone its precise definition and the hypotheses on $a$, for the two main symbol and operator classes we consider, and first discuss
some background and motivating facts.

The interest to study the model \eqref{eq:CPmain} comes from the pioneering work \cite{Nigmatullin1984}. 
Here, the author introduces a diffusion equation with memory which allows to take into account the non-Markovian 
character of the excitation transfer process in some heterogeneous media. It takes the general form
\begin{equation}\label{eq:CPconstant-coefficients}
	\frac{\partial^\beta U(t,x)}{\partial t^\beta} = C = \frac{\partial^2 U(t,x)}{\partial x^2}, \quad \beta\in (0,2],\quad  (t,x)\in [0,T)\times \R^d,
\end{equation}

where $C$ is a positive constant related to the diffusion coefficient anisotropy. The case $\beta=1$ describes the usual diffusion (absence of memory), which occurs, 
for instance, in a strongly dispersive medium, whereas in the case $\beta=2$ we find the classical wave equation (which corresponds to a full memory), which describes 
the transfer process in an homogeneous medium in which no energy loss appears. 
A detailed review about the properties of the solution to the Cauchy-type problem associated to \eqref{eq:CPconstant-coefficients} (in space dimension $d=1$) 
can be found in Chapter 6 of \cite{MainardiBook}. We also mention that long time decay estimates for the Cauchy problem associated to \eqref{eq:CPspace-time-fractional} were studied in \cite{DA2019} for $\beta\in (0,1)$ and in \cite{DAEP2019} for $\beta\in (1,2)$, and in both such cases they were applied to study the influence of a non-linear perturbation. Similar issues were later discussed in \cite{DAG2022}, in presence of an additional term $\partial_t U/\partial t$ in equation \eqref{eq:CPconstant-coefficients}, which can be interpreted as an heat equation with fractional damping (see also \cite{DAG2023} for other multi-term time-fractional models). 

In \cite{GMSR2021} the authors derive the following space-time fractional diffusion equation
\begin{equation} 
\label{eq:CPspace-time-fractional}
\frac{\partial^\beta U(t,x)}{\partial t^\beta}=\frac{\partial^\alpha U(t,x)}{\partial x^\alpha}, \quad \beta\in (0,2],\quad (t,x)\in [0,T)\times \R,
\end{equation}
where the time-fractional derivative is defined in the Caputo sense, whereas the space-fractional derivative of order $\alpha\in (0,2]$ is defined as a pseudodifferential operator with symbol $a(\xi)=-|\xi|^\alpha$, $\xi\in \R$. In particular, they show that such equation governs a large class of stochastic processes which are useful for modeling the dynamics of financial markets and for risk management. 
The Green functions for problem \eqref{eq:CPspace-time-fractional} can be expressed in terms of Wright type functions (see \cite{GIL2000}) and 
interpreted as probability density functions (see \cite{Schneider89}).
A review about further studies regarding problem \eqref{eq:CPspace-time-fractional} can be found in Chapter 7 of \cite{MainardiBook}.

In \cite{GLU2000} the authors consider a generalization of the models \eqref{eq:CPconstant-coefficients} and \eqref{eq:CPspace-time-fractional}, given by the pseudodifferential equation of fractional order
\begin{equation}\label{eq:CP-pseudiff}
	\frac{\partial^\beta U(x,t)}{\partial t^\beta} = a(D)U(t,x), \quad (t,x)\in [0,\infty)\times \R^d,
\end{equation}
where $a(D)=\Op(a)$ is a pseudodifferential operator, possibly with  a singular symbol. Namely, $a(\xi)$ is a continuous function in an open domain $G\subset\R^d$. The authors in \cite{GLU2000} obtain a representation formula of the solution to problem \eqref{eq:CP-pseudiff} in terms of Mittag-Leffler functions $E_{\beta,1}=E_{\beta,1}(z)$ and they apply it to study well-posedness results in the space $\Psi_{G,p}(\R^d):=\{f\in L^p(\R^d): \supp\widehat{f}\subset G\}$, $1\leq p\leq \infty$, endowed with a suitable notion of convergence, and in its dual space. 
The application of these results allows also to obtain some information about the well-posedness of problem \eqref{eq:CP-pseudiff} in the classical Sobolev spaces $H^s(\R^d)$.

The motivation for the analysis of equations of the form \eqref{eq:CP-pseudiff} comes from a general energy balance law  with the appropriate constitutive equations,
depending on a material or a substance or a field.
For example, stress and strains in visco-elastic bodies
or various fields in Maxwell's equations. We also mention that  the fractional Zener's and Burger's type model, related to
stress and strains in visco-elastic bodies various constitutive equations, were analyzed in \cite{AMP,ANPR}. 

In this paper, we find a representation formula for the solution to \eqref{eq:CPmain}, in terms of derivatives of the Mittag-Leffler functions, under suitable 
assumptions on the symbol $a$, by means of a (parameter-dependent) parametrix construction, in the case the symbol $a$ depends on $x$, or by means of 
(parameter-dependent) inversion, when the symbol $a$ does not depend on $x$. Laplace transform of vector-valued distributions, as well as its interplay
with pseudodifferential operators, is employed here (see Section 4 of \cite{CGP2} for details).%(see Appendix \ref{subs:LTandpsidos}).
We then apply such representation to obtain information on the 
regularity and decay properties of the solution, for initial data belonging to appropriate (weighted) Sobolev spaces. The obtained results rely, in particular, on certain 
decay properties of the derivatives of the Mittag-Leffler functions, which are established in Section \ref{subs:M-L}. %Appendix \ref{subs:M-L}. 
As recently discussed in 
\cite{Garrappa-Mainardi2025}, the study of these properties is also crucial for analyzing the behaviour of solutions to multi-term fractional-order differential equations, 
which can indeed be expressed in terms of derivatives of the Mittag-Leffler functions.

The paper is organized as follows. Our main results are presented in Sections \ref{sec:constcoeff} and \ref{sec:varcoeff}.
In Section \ref{sec:constcoeff} we focus on the constant coefficients (that is, \textit{Fourier multipliers}) case $\Op(a)=a(D)$.
In Section \ref{sec:varcoeff} we switch to the much more challenging variable coefficients case $\Op(a)=a(\cdot,D)$, and prove
a representation formula for the solution for two relevant classes of symbols. Section \ref{subs:M-L} collects various properties of Mittag-Leffler
functions, including estimates of the derivatives tailored to the symbolic analysis developed in this paper.
To keep this exposition within a reasonable length, the construction of the parameter-dependent parametrix and some properties
and results about the Laplace transform are described in detail in \cite{CGP2}, where basic notions concerning the so-called SG-calculus
can be found as well.
We employ the standard notation $D=(D_1,...,D_d),$ where $D_j=-i\partial_{x_j}$, $i=\sqrt{-1}$, $j=1,...,d$, for the derivatives,
and $\widehat u=\cF u = \cF(u)$ for the Fourier transform, of functions and distributions.

\section*{Acknowledgements}
 The first author has been partially supported by the Italian Ministry of the University and Research - MUR, within the framework of the Call relating to the scrolling of the final rankings of the PRIN 2022 - Project Code 2022HCLAZ8, CUP D53C24003370006 (PI A. Palmieri, Local unit Sc. Resp. S. Coriasco). The first author also expresses
gratitude for the hospitality extended to him during his visit to the Department of Mathematics and Informatics, University of Novi Sad, Serbia, during A.Y. 2024/2025,
where part of this work was developed. The second author has been partially supported by INdAM GNAMPA Project, Grant Code CUP E55F22000270001.
The third author has been supported by the Serbian Academy of Sciences and Arts, project F10.

%%%%%%%%%%%%%%% Section 2 %%%%%%%%%%%%%%%%%%%%%%%
\section{Constant coefficients equations}\label{sec:constcoeff} 
In this section we give a simple construction of a solution of the Cauchy-type problem \eqref{eq:CPmain},
where the symbol $a$ does not depend on $x$. Recall that, in this case, $\Op(a)=a(D)$ is called Fourier multiplier.
If $a$, in particular, is a polynomial, then $a(D)$ is a partial differential operator with constant coefficients. 
\begin{example}
First, we give an example inspired by \cite{AMP}. The energy balance law for the heat conduction reads
$$
\partial_te(t,x)=-K \mbox{ div}_x \mathbf{q}(t,x),\quad t\in(0,\infty),\; x\in\mathbb R^d,
$$
where $K>0$ is the coefficient of diffusion, $e$ is the internal energy 
and $\mathbf q$ is the heat flux vector.  Let $T$ be the temperature and assume that $e(t,x)=cT(t,x)+T_0(x), x\in\mathbb R^d, t\geq 0,$
where the constant $c$ is the specific heat and $T_0(x)=T(0,x).$ Using this, we arrive to 
\begin{equation}\label{eq:heatflux}
\partial_tT(t,x)=-\frac{K}{c}\mbox{ div}_x \mathbf{q}(t,x),\quad t\in(0,\infty),\; x\in\mathbb R^d.
\end{equation}
Next, instead of the classical Fourier law for the constitutive equation, namely
\[
	\mathbf q(t,x)=-c_0\nabla_x T(t,x), 
\]
where $c_0$ is the heat diffusion constant,
many authors use different forms of constitutive equations, see for example \cite{AJP,APS}. 
Let $r=1-\beta$, $\beta\in(0,1)$. We propose a constitutive equation of the form
\begin{equation}\label{eq:constlaw}
_0D_t^{-\beta}\mathbf q(t,x)=\nabla_x T(t,x),\quad   t\in(0,\infty),\; x\in\mathbb R^d,
\end{equation}

\noindent where $_0D_t^{-\beta}\mathbf{q}(t,x)$ is obtained by means of the Riemann-Liouville integral, namely
 \[
 	_0D_t^{-\beta}f(t):= \frac{1}{\Gamma(\beta)}\int_0^t \frac{f(\tau)}{(t-\tau)^{1-\beta}}d\tau, t>0.
\]
By \eqref{eq:constlaw}, applying $_0D_t^{-\beta}$ to both sides of \eqref{eq:heatflux} and setting $k=K/c$, it follows
\begin{equation}\label{eq:toymodel}
\partial_t^r T(t,x)= _0D_t^{-\beta}\partial_tT(t,x)=-k\Delta_x T(t,x),\quad  t\in(0,\infty),\; x\in\mathbb R^d.
\end{equation}
Comparing with (\ref{eq:CPmain}), in the given example we find
$ \Op(a)=a(D)$, $a(\xi)=-|\xi|^2$, $f\equiv0$, $T(0,x)=T_0(x)$.
\end{example}
Let then $r\in(0,1)$ and let $a(x,\xi)=a(\xi)$, $\xi\in\R^d$, be a nonnegative continuous function.
With these choices, \eqref{eq:CPmain} assumes the form
\begin{equation}\label{eq:Furmultipl}
\partial ^r_tu(t,x)+a(D)u(t,x)=f(t,x),\; u(0,x)=u_0(x), \quad t\in(0,\infty),\; x\in\mathbb R^d.
\end{equation}
For the main result of this section we first recall that the family 
$f_{\alpha}(t)=t_+^{\alpha-1}/\Gamma(\alpha)$, $\alpha>0$, $t\in\R$, and $f_{\alpha}=f^{(N)}_{\alpha+N}, \alpha\leq 0$, where $t^\alpha_+=H(t)t^\alpha$, 
is a group, that is $f_{\alpha}*f_{\beta}=f_{\alpha+\beta}$, $\alpha, \beta\in \R$ (see \cite{V}).
Moreover, for the Laplace transform, there holds
\begin{equation*}
 \mathcal{L}\left(\frac{t_+^{\alpha-1}}{\Gamma(\alpha)}\right)(s)=%
\frac{1}{s^{\alpha}}, \ \alpha\in\R,\  \Re
s>0. 
\end{equation*}
%
%where $t^{\alpha}_+=H(t)t^{\alpha}, \ t\in\mathbb{R}.$
Recall also \(\mathcal{L}(\delta)(s)=1\),  \(s\in\C\).
We will also need the space 
\[
	\scS([0,\infty)\times\R^d)=\bigcap_{k\in \N_0} \scS_k([0,\infty)\times\R^d),
\]
where
\begin{align*}
\scS_k([0,\infty)\times\R^d) = & \{\phi\in C^\infty([0,\infty)\times\R^d)\colon \\
&
\gamma_k(\phi)=\sup_{(t,x)\in[0,\infty)\times\R^d,p+|q|+\alpha+|\beta|\leq k}|t^p x^q \partial_t^\alpha\partial_x^\beta \phi(t,x)|<\infty\}.
\end{align*}
The space $\scS([0,\infty)\times\R^d)$ is a closed subspace of $\scS(\R^{d+1})$. As such, it is an FS and Montel space 
(see \cite{T}). We refer to \cite{JP} for the definition of $\scS'([0,\infty)\times \R^d)$. Note that 
$f_{\alpha+1}\in\scSp_k([0,\infty))$ for $\alpha<k$, $\alpha\in\N_0, k\in \N$, and  $f^{(\alpha+1)}_{\alpha+1}=\delta$.

\medskip

Let us also recall a simple result connected with the definition \eqref{eq:fracdergen} of  $\cpt c_t^{\nu}g$.
\begin{lemma}
Let $\nu=[\nu]+r>0$ and $g\in C^{[\nu]}(0,\infty)$ so that $g^{[\nu]}$ is locally absolutely
continuous in $[0,\infty)$.
Then, $\cpt c_t^{\nu}g(t)=\cpt c_t^{[\nu]+r}g(t)\in L^1_{\loc}[0,\infty)$.
\end{lemma}
\begin{proof}
It is enough to observe that, for any $A>0$,
\[
	\int_0^A\int_0^t\frac{g^{([\nu]+1)}(\tau)}{(t-\tau)^r}d\tau dt=
	\int_0^Ag^{([\nu]+1)}(\tau)\left[\int_0^A\frac{H(t-\tau)}{(t-\tau)^r}dt\right]d\tau <\infty.
\]
\end{proof}
\begin{lemma}\label{lem:Spstruct} 
Let $v\in\scSp([0,\infty)\times \R^d)$. Then there exist continuous functions $V_1(t,x)$ and $V_2(t,x)$, of polynomial growth in both variables $t\in[0,\infty)$, 
$x\in\R^{d}$, and $\alpha,m\in\N_0$, such that
$$
v= D_t^{\alpha+1} \Delta^m_x V_1 + D^{\alpha+1}_tV_2, \; \;\supp V_{1},  \supp V_2 \subset [0,\infty)\times \R^d.
$$
\end{lemma}
\begin{proof}
Since $v\in\scSp([0,\infty)\times\R^d)$, it follows that $v\in\scSp_k([0,\infty)\times\R^d)$ for some $k\in\N_0$. Recalling Schwartz's parametrix method, there exist  
$m, \alpha\in \N$,  $h\in C_0^{m}(\R^d)$ and $r\in\scD(\R^d)$ such that 
$ \delta=\Delta^m h+r$. It follows
\begin{align*}
	v&=v*\delta=[v*(\partial_t^{\alpha+1}f_{\alpha+1}\otimes(\Delta^m_x h+r))](t,x)
	\\
	&=\partial_t^{\alpha+1}\Delta^m_x\langle v(\tau,y),f_{\alpha+1}(t-\tau)\cdot h(x-y)\rangle \\
& \hspace{50pt}+\partial_t^{\alpha+1}\langle v(\tau,y),f_{\alpha+1}(t-\tau)\cdot r(x-y)\rangle.
\end{align*}
Since $V_1(t,x)=\langle v(\tau,y),f_{\alpha+1}(t-\tau)\cdot h(x-y)\rangle$
and $V_2(t,x)=\langle v(\tau,y),f_{s+1}(t-\tau)\cdot r(x-y)\rangle$ are continuous functions with the desired properties, the proof is completed.
\end{proof}
\begin{remark}
Lemma \ref{lem:Spstruct} allows us to choose $f\in\scSp([0,\infty)\times\R^d)$ in \eqref{eq:Furmultipl}. Indeed, the Laplace transform of $f$ with respect
to $t$ is then analytic in $\Re s>0$. The same property holds true for its partial Fourier transform with respect to $x$.
\end{remark}
Since $e_s\colon t\mapsto e^{-st}$, $t\geq 0$, $\Re s>0$, belongs to $\scS([0,\infty))$, it is possible to define,
for $f\in\scSp([0,\infty)\times\R^d)$,
\[
	(\cL f)(s,x)=\langle f(t,x),e^{-st}\rangle=\langle f(t,x),\zeta(t)e^{-st}\rangle, \quad\Re s>0, x\in\R^d,
\]
where $\zeta\in C^\infty(\R)$ is supported in $[-\varepsilon,\infty)$ and equals one in $[-\varepsilon/2,\infty)$, $\varepsilon>0$.
The definition of $\cL f$ does not depend on $\varepsilon>0$ and $\zeta$ with the desired properties 
(see also Section 4 of \cite{CGP2}, Definition 4.6).
%(see also Appendix \ref{subs:LTandpsidos}, Definition \ref{def:LTdis}).

Two distributions $h,k\in\scSp(\R^d)$ are called convolvable if their convolution, defined by 
$$
\langle h*k,\phi\rangle=\lim_{\nu\rightarrow \infty}
\langle (h\otimes k)(x,y),\kappa_\nu(x,y)\phi(x+y) \rangle, \quad\phi\in\scS(\R^d),
$$ 
exists independently of a unit sequence $(\kappa_\nu)_{\nu\in\N}\subset\scD(\R^{2d})$, whose elements $\kappa_\nu$
equal one in balls $B(0,R_\nu)$ and zero out of balls $B(0,R_{\nu+1})$, $\nu\in\N$, 
where $R_\nu\rightarrow \infty$, $(R_\nu)_\nu$ is strictly increasing (see \cite{V}).

Let 
 $a(\xi), \xi\in\R^d,$ be a  non-negative continuous function of slow growth over $\R^d,$   (that is, polynomially bounded)
and
$E(t,\xi)=E_{r,1}(-(a^{1/r}(\xi)t)^r)$, $x\in\R^d, t\in[0,\infty)$, where $E_{r,1}$ denotes the Mittag-Leffler function (see Section \ref{subs:M-L}). It is a continuous function on the domain $x\in\R^d, t\in[0,\infty)$ We define $E(x,t)=0$ for $x\in\R, t\in(-\infty,0)$. Below, we  will use notation $F(t,x)=\cF^{-1}_{\xi\to x}(E(t,\xi))$. The next Theorem \ref{thm:maint1} is our first main result.

\begin{theorem}\label{thm:maint1}
Let  $f\in\scSp([0,\infty)\times\R^{d})$ and $a\in C(\R^d)$ be a non-negative continuous function of slow growth. 
Let $u_0\in\scSp(\R^d)$ be convolvable with $\mathcal  F^{-1}_{\xi\rightarrow x}(\langle E(t,\xi),\theta(t)\rangle)$ for every $\theta\in\scS([0,\infty))$.
Assume also that $(F(\cdot,x)* f_r(\cdot))(t,x)=F(t,x)*_t f_r(t)$ is $(t,x)$-convolvable with $\frac{\partial f}{\partial t}$.
% $\frac{\partial f}{\partial t}(t, x)$, $t>0$, $x\in\R^d$. 
Then the unique solution of \eqref{eq:Furmultipl} in $\scSp([0,\infty)\times\R^d)$ is given by
\begin{equation}\label{eq:5}
u(t,x)=F(t,x)*_xu_{0}(x)+
(F(t,x)*_tf_r(t))*_{(t,x)}\frac{\partial f}{\partial t}(t,x).%, \quad t>0, x\in\R^d.
\end{equation}

Moreover, assume that for $m\in\N$ and $l\in\{1,...,m\}$,
\begin{equation}\label{eq:cond0}
\begin{aligned}
& u_0\in C^m(\R^d), \\
& \frac{\partial f}{\partial t}(\cdot,x)\in L^1_{\loc}([0,\infty)), \; \text{ for all }
x\in \R^d, \\
& \frac{\partial f}{\partial t}(t,\cdot)\in C^m(\R^d), \quad \text{ for almost all } t\in[0,\infty),
\end{aligned}
\end{equation}
 and that $a$ is of slow growth. 
Let $k\in\N$ satisfy $(k+1)r>1$.  
 If 
\begin{equation}\label{eq:cond1}
\cF^{-1}_{\xi\rightarrow \cdot}[\xi_j^la^p(\xi)\widehat u_0(\xi)]\in C(\R^d), 
\end{equation} 
for all $p=1,...,k+1, j=1,...,d, l=1,...,m,$ and 
\begin{align}\label{eq:cond2}
\cF^{-1}_{\xi\rightarrow \cdot}\left[\xi_j^la^p(\xi)\cF_{x\rightarrow \xi}\left(\frac{\partial f}{\partial t}(t,x)\right)\right]\in C(\R^d),
\end{align}
for all  $t\in[0,\infty),\; p=1,...,k+1, \; j=1,...,d, l=1,...,m,$ then the solution  $u$, given by \eqref{eq:5},
has locally integrable derivative with respect to $t\geq 0$ for every $x\in\R^d$. Moreover, it is of class $C^m$ with respect to $x\in\R^d$ for every $t\geq 0,$ and $\frac{\partial u}{\partial t}$ is of class $C^m$ for almost all $t\geq 0.$
\end{theorem}
\begin{remark} The explicit form \eqref{eq:5} of the solution enables us to presume various conditions on $a$, $u_0$ and $f$ 
so that conditions \eqref{eq:cond1} and \eqref{eq:cond2} hold and \eqref{eq:Furmultipl} has a unique  solution with the locally integrable derivative with respect to $t$ and with increased regularity with respect to $x$. If $u_0=0$ then, with appropriate assumptions on $f,$ one can have a solution with prescribed regularity properties with respect to  both variables.
\end{remark}
\begin{proof}[of Theorem \ref{thm:maint1}] 
Note that $(\cL f)(s,\cdot)$, $\Re s>0$,  is analytic in $s$, $\Re s>0$ with values in $\scSp(\R^d)$. 
So, applying the Fourier transform one obtains $\widehat{\cL f}(s,\cdot)$, analytic in $s$, $\Re s>0$, and belonging to $\scSp(\R^{d})$ for every $s$, $\Re s>0$. 
Applying Fourier and then Laplace transform to \eqref{eq:Furmultipl}, one obtains
\begin{equation*}
\partial_t^r\widehat u(t,\xi)+a(\xi)\widehat u(t,\xi)=\widehat f(t,\xi),
\end{equation*}
and
%Thus, applying the Laplace and the Fourier transform to \eqref{eq:Furmultipl}, we have
%
\[
	(s^r+a(\xi))U(s,\xi)=s^{r-1}\widehat u_0(\xi)+(\cL{\widehat{f}})(s,\xi),\quad  \Re s>0, \xi\in\R^d,
\]
that is,
\[
	U(s,\xi)=\frac{s^{r-1}}{s^r+a(\xi)}\widehat u_0(\xi)+\frac{(\cL\widehat{f})(s,\xi)}{s^r+a(\xi)}, \quad \Re s>0, \xi\in\mathbb R^d.
\]

Then, by inverse Laplace transform (see, e.g., Section 4 of \cite{CGP2}),%(see Appendix \ref{subs:LTandpsidos}), 
we obtain
\[
	U(t,\xi)=\cL^{-1}_{s\to t}\left(\frac{s^{r-1}}{s^r+a(\xi)}\right)\widehat u_0(\xi)+\cL^{-1}_{s\to t}\left(\frac{1}{s^r+a(\xi)}\right)*_t\widehat{f}(t,\xi),
\]
for all $t\in(0,\infty), \xi\in\mathbb R^d.$
Employing the notation of Mainardi \cite{M}, we have
\begin{equation}\label{ML}
	\cL^{-1}_{s\to t}\left(\frac{s^{r-1}}{s^r+a(\xi)}\right)(t)=e_r(a^{1/r}(\xi)t)=E_{r,1}(-(a^{1/r}(\xi)t)^r)=E(t,\xi),
\end{equation}
for all $t\in(0,\infty), \xi\in\mathbb R^d.$
This is a consequence of the fact that
$$-\frac{s^{r-1}}{s^r+a(\xi)}=\frac{a(\xi)}{s(s^r+a(\xi))}-\frac{1}{s}, \Re s>0
$$
and that $\mathcal L^{-1}(\frac{1}{s})(t), t\in\R,$ is the Heaviside function (for which we assume the continuity from the right at $t=0$).
We also have for any  $\xi\in\R^d$
\[
	\cL^{-1}_{s\to t}\left(\frac{1}{s^r+a(\xi)}\right)(t)=\mathcal L^{-1}_{s\to t}\left(\frac{s^{r-1}}{s^r+a(\xi)}\cdot\frac{1}{s^{r-1}}\right)(t)=
	E(t,\xi)*_t\left[\frac{d}{dt}\frac{t^{r-1}}{\Gamma(r)}\right](t), 
\]
where on the right hand side we have the convolution of two distributions supported in $[0,\infty).$
By (3.3) in \cite{M}, there holds,
\begin{equation}\label{eq:4}
%	\frac{E(t,\xi)}{\frac{a^{-1}(\xi)t^{-r}}{\Gamma(1-r)}}\rightarrow 1, \mbox{ as }  a^{1/r}(\xi) t\rightarrow \infty,
	E(t,\xi)\sim\frac{a^{-1}(\xi)t^{-r}}{\Gamma(1-r)},
	\;\text{ and }\;
%	\frac{E(t,\xi)}{(1- \frac{a(\xi)t^{r}}{\Gamma(1+r)})}\rightarrow 1, \mbox{ as }  a(\xi)^{1/r}t\rightarrow 0,
	E(t,\xi)\sim 1- \frac{a(\xi)t^{r}}{\Gamma(1+r)}, \mbox{ as }  a(\xi)^{1/r}t\rightarrow 0,
\end{equation}
as $a^{1/r}(\xi) t\rightarrow \infty$. By \eqref{eq:4}, for any given $\varepsilon>0$ there exists $M>0$ such that
\[
	E(t,\xi)\cdot\left[ \frac{a^{-1}(\xi)t^{-r}}{\Gamma(1-r)}\right]^{-1}\in(1-\varepsilon, 1+\varepsilon),\quad a^{1/r}(\xi)t>M,
\]
which implies
\[
	E(t,\xi)\leq \frac{a^{-1}(\xi)t^{-r}}{\Gamma(1-r)}(1+\varepsilon)\leq \frac{1+\varepsilon}{\Gamma(1-r)\,M^r},\quad a^{1/r}(\xi)t>M.
\]
Similarly, for any given $\eta>0$ there exists $\delta>0$ such that
\[
	E(t,\xi)\cdot\left[ 1- \frac{a(\xi)t^{r}}{\Gamma(1+r)}\right]^{-1}\in(1-\eta,1+\eta),\quad a(\xi)^{1/r}t<\delta.
\]
This gives
\[
	E(t,\xi)\leq \left[1+ \frac{\delta^r}{\Gamma(1+r)}\right](1+\eta),\quad a(\xi)^{1/r}t<\delta.
\]
Denote
\begin{align*}
	&\cA_1=\{(t,\xi): t\geq 0,\, a(\xi)=0\},  &&\cA_2=\{(t,\xi): t\geq 0,\, a^{1/r}(\xi)t>M)\},
	\\
	&\cA_3=\{(t,\xi): t\geq 0,\, a^{1/r}(\xi)t<\delta)\},   &&\cA_4=\{(t,\xi): t\geq 0,\, a^{1/r}(\xi)t\in[\delta,M]\}.
\end{align*}
Since $E(t,\xi)$ is bounded on all measurable sets $\mathcal A_1, \mathcal A_2, \mathcal A_3, \mathcal A_4,$
it follows  that $E(t,\xi)\in L^\infty([0,\infty)\times\mathbb R^d).$

With this and the assumptions of the first part of the statement, we have that the solution of \eqref{eq:Furmultipl} belongs to $\scSp([0,\infty)\times\R^d)$,
as claimed. The uniqueness follows from the injectivity of the Fourier and Laplace transforms on $\scSp(\R^d)$ and $\scSp([0,\infty))$, respectively. To prove the second part of the claims we need the subsequent Lemma \ref{lem:l2}.
\begin{lemma}\label{lem:l2} Under the assumptions \eqref{eq:cond0}, \eqref{eq:cond1}, and \eqref{eq:cond2}, with $a$ as in Theorem \ref{thm:maint1}  and $k\in\N$ satisfying $(k+1)r>1$, it follows that
$\dfrac{\partial F}{\partial t}(t,x )*_xu_0(x)$, $x\in\R^d$, $t\in[0,\infty),$ is a locally integrable function with respect to $t$ for all $x\in\R^d$ and of class $C^m(\R^d), m\in\N,$ with respect to $x$ for almost all $t\in(0,\infty)$.
\end{lemma}
\noindent
Postponing the proof of Lemma \ref{lem:l2}, let distributions $u_1$ and $u_2$ be defined as  $u_1(t,x)=F(t,x)*_xu_0(x)$ and $u_2(x,t)=u(x,t)-u_1(x,t)$. Let $j\in\{1,...,d\}$ and $l\in\{1,...,m\}$.
% Since $\frac{\partial u_0}{\partial t}\equiv0$ 
%and $\delta|_{(0,+\infty)}\equiv0$, 
One has 
\begin{align*}
	D_j^l\frac{\partial u_1}{\partial t}(t,x)
	&=\frac{\partial F}{\partial t}(t,x)*_xD_j^l u_0(x)
	\\
	&=\cF^{-1}_{\xi\rightarrow x}\left[\cL^{-1}_{s\to t}\left(\frac{s^r}{s^r+a(\xi)}\right)\xi_j^l\,\widehat{u_0}(\xi)\right]\\
&
	=\cF^{-1}_{\xi\rightarrow x}\left[\cL^{-1}_{s\to t}\left(\frac{1}{1+s^{-r}a(\xi)}\right)\xi_j^l\,\widehat{u_0}(\xi)\right]
\\
	&=\sum_{p=0}^k \mathcal F^{-1}_{\xi\rightarrow x}\left[\cL^{-1}_{s\to t}\left((-1)^p s^{-rp}a^p(\xi)\right)\xi^l_j\widehat{u_0}(\xi)\right]
	\\
	&+\cF^{-1}_{\xi\rightarrow x}\left[\cL^{-1}_{s\to t}\left(\frac{(-1)^{k+1}s^{-(k+1)r}a^{k+1}(\xi)}{1+s^{-r}a(\xi)}\right)\xi_j^l\widehat{u_0}(\xi)\right]
\\
	&=\sum_{p=1}^k (-1)^p f_{rp}(t)\mathcal F^{-1}_{\xi\rightarrow x}(\xi^l_ja^{p}(\xi)\widehat{u_0}(\xi))\\
& \qquad +
	(-1)^{k+1}\cF^{-1}_{\xi\rightarrow x}\left[\cL^{-1}_{s\to t}\left(\frac{1}{s^{kr}(s^r+a(\xi))}\right)\xi_j^la^{k+1}(\xi)\widehat{u_0}(\xi)\right].
\end{align*}
Since $\cL^{-1}_{s\to t}\left(\frac{1}{s^{kr}(s^r+a(\xi))}\right)(t,\xi)$ is continuous in $t\geq 0$  and bounded in $\xi\in\R^d$, by Lemma \ref{lem:l2} and \eqref{eq:cond1} it follows that $u_1$ has a locally integrable derivative with respect to $t$ for all $x\in\R^d$ and it is of class $C^l$ with respect to $x$ for almost all $t\geq 0$. Thus, $u_1$ is continuous and has a locally integrable derivative with respect to $t$ for all $x\in\R^d$ and $u_1$ is of class $C^m$ for all $t\geq 0$ and $\frac{\partial u_1}{\partial t}$ is of class $C^m$
for a.e. $t\geq 0$.   

We now apply the same procedure to $u_2(t,x)=(F(t,x)*_tf_r(t))*_{(t,x)}\frac{\partial f}{\partial t}(t,x)$, 
with $\cF_{x\rightarrow \xi}(f(t,x))$, $t\in[0,\infty)$, in place of $\widehat u_0(\xi)$.  Condition \eqref{eq:cond2} implies
\begin{align*}
	D_j^l&\frac{\partial}{\partial t}u_2(t,x)\\
	& =
	D_j^l\cF^{-1}_{\xi\rightarrow x}\left[f_r(t)*_t\left(\cL^{-1}_{s\to t}\left(\frac{s^r}{s^r+a(\xi)}-1\right)+\delta\right)
	\cdot
	\frac{\partial}{\partial t}\mathcal F_{x\rightarrow \xi}(f(t,x))\right],
\end{align*}
for all $j=1,...,d.$
This gives ($l\in\{1,...,m\}$)
\begin{align*}
&D_j^l\frac{\partial}{\partial t}u_2(t,x)
=
\mathcal F^{-1}_{\xi\rightarrow x}\left[\xi^l_j f_r(t)\cdot\frac{\partial}{\partial t}\cF_{x\rightarrow \xi}(f(t,x))\right]
\\
&+
 \sum_{p=1}^k (-1)^p f_{(p+1)r}(t)*_t\cF^{-1}_{\xi\rightarrow x}\left[\xi_j^la^{p}(\xi)\cF_{x\rightarrow \xi}\left(\frac{\partial f}{\partial t}(t,x)\right)\right]
\\
& \vspace{-30pt}+(-1)^{k+1}
f_r(t)*_t
\cF^{-1}_{\xi\rightarrow x}\left[\xi_j^l\cL^{-1}_{s\to t}\left(\frac{1}{s^{kr}(s^r+a(\xi))}\right)a^{k+1}(\xi)\cF_{x\rightarrow \xi}\left(\frac{\partial f}{\partial t}(t,x)\right)\right],
\end{align*}
for all $j=1,...,d.$
By the same arguments employed above we have that $u_2$ has the same regularity properties as $u_1$. The proof is complete.
\end{proof}
\begin{proof}[of Lemma \ref{lem:l2}]
Recall that $\cL_{t\to s}(\frac{\partial E}{\partial t}(t,\xi))=\frac{s^r}{s^r+a(\xi)}-1$, $\Re s>a(\xi)$, $\xi\in \R^d$ since $E(0,x)=1$. There holds, for $\Re s>0$, $\xi\in \R^d$,
\begin{align*}
	\cL_{t\to s}\left(\frac{\partial E}{\partial t}(t,\xi )\right)=\frac{s^r}{s^r+a(\xi)}-1&=\frac{-a(\xi)}{s^{r}+a(\xi)},\\
	\quad\frac{-a(\xi)}{s^{r}+a(\xi)}+\frac{a(\xi)}{s^r}&=\frac{a^2(\xi)}{s^{r}(s^r+a(\xi))},\\
	\frac{a^2(\xi)}{s^{r}(s^r+a(\xi))}-\frac{a^2(\xi)}{s^{2r}}&=\frac{-a^3(\xi)}{s^{2r}(s^r+a(\xi))}.
\end{align*}
After repeating this procedure $k$ times, we obtain that, for $\Re s>0$, 
\[
	\frac{s^r}{s^r+a(\xi)}-1=-\frac{a}{s^r}+\frac{a^2(\xi)}{s^{2r}}+...+(-1)^k
	\frac{a^k(\xi)}{s^{kr}}+(-1)^{k+1}\frac{a^{k+1}(\xi)}{s^{kr}(s+a(\xi))}, 
\]
where $k\in\N$ is determined so that $(k+1)r>1$. Note also that, for $\Re s>0$, $|s^r+a(\xi)|\geq |s|^r$. This implies
\[
	\frac{a^{k+1}(\xi)}{|s^{(k+1)r}+a(\xi)s^{kr}|}=
	\frac{a^{k+1}(\xi)}{|s|^{kr}\cdot|s^r+a(\xi)|}\leq \frac{a^{k+1}(\xi)}{|s|^{(k+1)r}}, \quad \Re s >0,\xi\in\R^d.
\]
We also note that 
\begin{align*}
\cL^{-1}_{s\to t}\left(\frac{1}{s^{pr}}\right)&=f_{pr}(t)\in L^1_{\loc}(0,\infty) \qquad \text{ for all } p=1,...,k, \\
\cL^{-1}_{s\to t}(\frac{1}{s^{(k+1)r}})&=f_{(k+1)r}(t) \text{ is continuous },
\end{align*}
and, by the assumption \eqref{eq:cond1}, $\cF^{-1}(a^p\widehat u_0)\in C(\R^d)$ for all $p=1,...,k+1.$ Together with \eqref{eq:cond0}, this completes the proof.
\end{proof}

%%%%%%%%%%%%%%%%%%%%%%%% Section 3
%%%%%%%%%%%%%%%%%%%%%%%%%%%%

\section{Variable coefficients equations}\label{sec:varcoeff}
In this section, 
we extend the results mentioned above to two classes of variable coefficients operators. The first case we consider is the one where $\Op(a)$
in an operator belonging to the so-called SG-calculus (see, e.g., Section 2 of \cite{CGP2} %(see Appendix \ref{subs:sgcalc} 
for a short summary of the main features of this calculus), 
with positive order. Namely, denoting, as usual, $\norm{y}=\sqrt{1+|y|^2}$, $y\in\R^d$, in the estimates below, the symbol $a$ is real-valued, 
%\textcolor{blue}{non-negative}, 
and satisfies the following assumptions:

\begin{enumerate}[label=\textbf{(H\arabic*)},ref=\textbf{(H\arabic*)}]
\item \label{hyp:a_symbol} there exist $m,\mu\in(0,+\infty)$ such that $a\in S^{m,\mu}(\R^d\times \R^d)$;
\item \label{hyp:a_hypoellyptic} $a$ is non-negative, and
there exist $R>0$, $m^\prime\in [0,m]$ and  $\mu^\prime\in [0,\mu]$ such that, for any $(x,\xi)\in \R^{d}\times \R^d$ with $|x|+|\xi|\geq R$,
\begin{equation}
a(x,\xi)\geq C \<x\>^{m^\prime}\<\xi\>^{\mu^\prime},
\end{equation}
for some constant $C>0$ independent of $x$ and $\xi$;
\item \label{hyp:a_derivatives} for all multi-indices $\alpha, \beta\in \N^d$ there exist constants $C_{\alpha \beta}>0$ such that 
\begin{equation}
 \frac{\left|\partial_x^\alpha\partial_\xi^\beta a(x,\xi)\right|}{a(x,\xi)}\leq C_{\alpha \beta} \<x\>^{-|\alpha|}\<\xi\>^{-|\beta|},
\end{equation}
for any $(x,\xi)\in \R^{d}\times \R^d$ with $|x|+|\xi|\geq R$.
\end{enumerate}
We recall that, for $z,\zeta\in\R$, the corresponding so-called Sobolev-Kato (or \textit{weighted Sobolev}) space of order $(z,\zeta)$
is defined as
\begin{equation}\label{eq:skspace}
  	H^{z,\zeta}(\R^d)= \{u \in \scS^\prime(\R^{n}) \colon \|u\|_{z,\zeta}=
	\| \langle\cdot\rangle^z
	\langle D\rangle^\zeta u\|_{L^2}< \infty\}
\end{equation}
(see, e.g., Section 2 of \cite{CGP2} for some properties of these functional spaces and their interplay with the 
SG-calculus).

The following Theorem \ref{thm:main} is our second main result.
\begin{theorem}\label{thm:main}
	In the Cauchy problem \eqref{eq:CPmain}, assume $f=0$ and $u_0\in H^{\ell,\rho}(\R^d)$, see \eqref{eq:skspace}, and let $a$ satisfy assumptions 
	\ref{hyp:a_symbol}, \ref{hyp:a_hypoellyptic} and \ref{hyp:a_derivatives}. 
	Then the Cauchy problem \eqref{eq:CPmain} admits a unique solution 
	\[
		u\in C([0,+\infty),H^{\ell,\rho}(\R^d))\cap C((0,+\infty),H^{\ell+m^\prime,\rho+\mu^\prime}(\R^d)),
	\]
	given, modulo $C^\infty([0,+\infty),\scS(\R^d))$, by
	\begin{equation}\label{eq:reprform_u}
		u(t)=\Op(K_0(t))u_0,
	\end{equation}
	where 
	\begin{align}\label{eq:K_asexp_thm}
		K_0(t,x,\xi)&\sim \sum_{j\in \N} \frac{t^{jr}}{j!} A_j(x,\xi) E^{(j)}_{r,1}(-t^r a(x,\xi)), \quad t\in [0,\infty), x,\xi\in \R^d.
	\end{align}
	In \eqref{eq:K_asexp_thm}, $E_{r,1}$ denotes the Mittag-Leffler function, the symbols $A_j$, $j\in\N$, are defined in Proposition \ref{prop:cs_parametrix} below, 
	and it holds 
	\[
		K_0\in C([0,+\infty),S^{0,0}(\R^d\times\R^d)) \cap C((0,+\infty), S^{-m^\prime,-\mu^\prime}(\R^d\times\R^d)).
	\]
\end{theorem}

Our third main result, the subsequent Theorem \ref{thm:main_nhom}, deals with the non-homogeneous Cauchy problem \eqref{eq:CPmain}.
\begin{theorem}\label{thm:main_nhom}
In the Cauchy problem \eqref{eq:CPmain}, assume $u_0\in H^{\ell,\rho}(\R^d)$ and $f\in  C([0,\infty), H^{\ell-m^\prime+k,\rho-\mu^\prime+\kappa}(\R^d))$, 
$k\in(0,m^\prime]$, $\kappa\in(0,\mu^\prime]$,
and let $a$ satisfy assumptions \ref{hyp:a_symbol}, \ref{hyp:a_hypoellyptic} and \ref{hyp:a_derivatives}.
Then the Cauchy problem \eqref{eq:CPmain} admits a unique solution 
\[ u\in C([0,+\infty),H^{\ell,\rho}(\R^d)),\]
	given, modulo $C([0,+\infty),\scS(\R^d))$, by
	\begin{equation}\label{eq:reprform_u_nhom}
		u(t)=\Op(K_0(t))u_0+\int_0^t \Op(K_1(\tau))f(t-\tau)d\tau,
	\end{equation}
	where $K_0(t)$ is given by \eqref{eq:K_asexp_thm} and for all $ t\in [0,\infty), x,\xi\in \R^d,$ 
	\begin{equation}\label{eq:K1_asexp_thm}
		K_1(t,x,\xi)\sim \sum_{j\in \N} \frac{t^{jr+r-1}}{j!} A_j(x,\xi) E^{(j)}_{r,r}(-t^r a(x,\xi)),
	\end{equation}
	satisfies 
	\[
		K_1\in C([0,+\infty),S^{0,0}(\R^d\times\R^d)) \cap C((0,+\infty), S^{-pm^\prime,-p\mu^\prime}(\R^d\times\R^d)), \quad p\in(0,2].
	\]
	Moreover, if additionally $f\in  C((0,\infty), H^{\ell+\varepsilon,\rho+\varepsilon}(\R^d))$, $\varepsilon>0$ arbitrarily small,
	 the Cauchy problem \eqref{eq:CPmain} admits a unique solution 
	\[ u\in C([0,+\infty),H^{\ell,\rho}(\R^d))\cap C((0,+\infty),H^{\ell+m^\prime,\rho+\mu^\prime}(\R^d)),\]
	given, modulo $C([0,+\infty),\scS(\R^d))$, by \eqref{eq:reprform_u_nhom} as above.
\end{theorem}
We refer to  Section \ref{subs:M-L} % Appendix \ref{subs:M-L} 
for the Mittag-Leffler function $E_{r,r}$ and its derivatives.

\medskip

By a completely similar approach, we can prove an analogous results for a hypo-elliptic (pseudo)differential operator with symbol
$a=a(x,\xi)$ belonging to the (classical) H\"ormander calculus. Namely, consider the following alternative assumptions:
\begin{enumerate}[label=\textbf{(H\arabic*)$'$},ref=\textbf{(H\arabic*)$'$}]
\item \label{hyp:a_symbolprime} there exist $\mu\in(0,+\infty)$ such that $a\in S^{\mu}(\R^d\times \R^d)$;
\item \label{hyp:a_hypoellypticprime} a is non-negative, and there exist $R>0$ and  $\mu^\prime\in [0,\mu]$ such that, for any $(x,\xi)\in \R^{d}\times \R^d$ with $|\xi|\geq R$,
\[
	a(x,\xi)\geq C\<\xi\>^{\mu^\prime},
\]
for some constant $C>0$ independent of $x$ and $\xi$;
\item \label{hyp:a_derivativesprime} for all multi-indices $\alpha, \beta\in \N^d$ there exist constants $C_{\alpha \beta}>0$ such that 
\[
 \frac{	\left|\partial_x^\alpha\partial_\xi^\beta a(x,\xi)\right|}{a(x,\xi)}\leq C_{\alpha \beta} \<\xi\>^{-|\beta|},
\]
for any $(x,\xi)\in \R^{d}\times \R^d$ with $|\xi|\geq R$.\\
\end{enumerate}
The next Theorem \ref{thm:mainbis} is our fourth and final main result.
\begin{theorem}\label{thm:mainbis}
	\begin{enumerate}
	\item[ i)] In the Cauchy problem \eqref{eq:CPmain}, assume $f=0$ and $u_0\in H^{\rho}(\R^d)$, and let $a$ satisfy assumptions 
	\ref{hyp:a_symbolprime}, \ref{hyp:a_hypoellypticprime}, \ref{hyp:a_derivativesprime}. 
	%Moreover, assume that
%	%
%	\begin{enumerate}[label=\textbf{(H\arabic*)},ref=\textbf{(H\arabic*)}]
%		\setcounter{enumi}{3}
%		\item \label{hyp:argaprime} either $r\leq1/2$, or $r>1/2$ and $-(1-r)\pi < \arg(a(x,\xi))< (1-r)\pi$, for all $x,\xi\in\R^d$, or $a(x,\xi)\in(0,+\infty)$, for all $x,\xi\in\R^d$. 
%	\end{enumerate}
	%
	Then the Cauchy problem \eqref{eq:CPmain} admits a unique solution 
	\[
		u\in C([0,+\infty),H^{\rho}(\R^d))\cap C((0,+\infty),H^{\rho+\mu^\prime}(\R^d)),
	\]
	given, modulo $C^\infty([0,+\infty),C^\infty(\R^d))$, by \eqref{eq:reprform_u}, where $K_0$ is defined by \eqref{eq:K_asexp_thm}.
	In \eqref{eq:K_asexp_thm}, $E_{r,1}$ denotes the Mittag-Leffler function, the symbols $A_j$, $j\in\N$, are defined in Proposition \ref{prop:cs_parametrix} below, 
	and it holds 
	%
%	\begin{equation}\label{eq:reprform_uprime}
%		u(t)=\Op(K'(t))u_0,
%	\end{equation}
%	%
%	where 
%	%
%	\begin{align}
%		K'(t,x,\xi)&\sim \sum_{j\in \N} \frac{t^{jr}}{j!} A_j(x,\xi) E^{(j)}_{r,1}(-t^r a(x,\xi)), \quad t\in [0,\infty), x,\xi\in \R^d,
%	\end{align}
	%
%	satisfies 
	%
	\[
		K_0\in C([0,+\infty),S^{0}(\R^d\times\R^d)) \cap C((0,+\infty), S^{-\mu^\prime}(\R^d\times\R^d)).
	\]
	\item[ii)] In the Cauchy problem \eqref{eq:CPmain}, assume $u_0\in H^{\rho}(\R^d)$ and $f\in  C([0,\infty),$ $H^{\rho-\mu^\prime+\kappa}(\R^d))$,
	 $\kappa\in(0,\mu^\prime]$ 
	and let $a$ satisfy assumptions \ref{hyp:a_symbolprime}, \ref{hyp:a_hypoellypticprime}, \ref{hyp:a_derivativesprime}. 
	Then the Cauchy problem \eqref{eq:CPmain} admits a unique solution 
	\[ u\in C([0,+\infty),H^{\rho}(\R^d)),\]
	given, modulo $C^\infty([0,+\infty),C^\infty(\R^d))$, by
	\begin{equation}\label{eq:reprform_u_nhombis}
		u(t)=\Op(K_0(t))u_0+\int_0^t \Op(K_1(\tau))f(t-\tau)d\tau,
	\end{equation}
	where $K_0$ is given by \eqref{eq:K_asexp_thm}, $K_1$ is given by \eqref{eq:K1_asexp_thm} and satisfies 
	\[
		K_1\in C([0,+\infty),S^{0}(\R^d\times\R^d)) \cap C((0,+\infty), S^{-p\mu^\prime}(\R^d\times\R^d)), \quad p\in(0,2].
	\]
	Moreover, if additionally $f\in  C((0,\infty), H^{\rho+\varepsilon}(\R^d))$, $\varepsilon>0$ arbitrarily small,
	the Cauchy problem \eqref{eq:CPmain} admits a unique solution 
	\[ u\in C([0,+\infty),H^{\rho}(\R^d))\cap C((0,+\infty),H^{\rho+\mu^\prime}(\R^d)),\]
	given, modulo $C([0,+\infty),C^\infty(\R^d))$, by \eqref{eq:reprform_u_nhombis} as above.
	\end{enumerate}
\end{theorem}
\begin{remark}
	We remark that Theorems \ref{thm:main} and \ref{thm:main_nhom} hold true also in the setting $[0,+\infty)\times M$, involving weighted Sobolev
	spaces $H^{\ell,\rho}(M)$, where $M$ is a so-called SG-manifold, a manifold with cylindrical ends, 
	or (the interior of) an asymptotically Euclidean manifold (see, e.g., \cite[Appendix]{CD21} and \cite{Cord,ME,Schro}). 
	Analogously, Theorem \ref{thm:mainbis} holds true in the setting $[0,+\infty)\times M$, involving Sobolev spaces $H^{\rho}(M)$,
	where $M$ is a closed manifold.
	For the sake of brevity, in the sequel we give the detailed proof of Theorems \ref{thm:main} and \ref{thm:main_nhom} only, 
	omitting the proofs of Theorem \ref{thm:mainbis},
	which is completely similar, as well as the statements and the proofs of the results on manifolds, which follow by the main
	results stated in this section, reducing to the setting $[0,+\infty)\times\R^d$, by means of the usual approach based on
	local charts and subordinate partition of unity, compatible with the geometric setting and employed symbolic structures.
\end{remark}

As a first step in proving the results stated above, we apply the Laplace transform $\cL$ with respect to $t$ in \eqref{eq:CPmain}.
Since $\Op(a)$ and $\cL$ commute (see Section 4 of \cite{CGP2}), %(see the Appendix \ref{subs:LTandpsidos}), 
we derive that $U(s,x):=(\mathcal{L}(u(\cdot,x)))(s)$ solves the parameter-dependent 
pseudodifferential equation 
\begin{equation}
\label{eq:equation_Laplace_transform}
\Big(\Big(s^r + \Op(a)\Big)U(s,\cdot)\Big)(x)=s^{r-1}u_0(x)+F(s,x), \quad x\in\R^d,
\end{equation}
where $F(s,x):=(\mathcal{L}f(\cdot,x))(s)$, for every $s\in \C$ with $\Re s>\lambda$ sufficiently large. Here, for all $s=|s| e^{i \theta}\in\C$, we are denoting by $s^r+\Op(a)$ the pseudodifferential operator with symbol $s^r +a(x,\xi)$, where $s^r$ denotes the complex root of order $r$ on the principal branch, that is, 
$s^r:= |s|^r e^{i r \theta}$ with $\theta \in (-\pi,\pi)$. In the sequel, we will often write $U(s)$ for $U(s)\colon x\mapsto U(s,x)$, $F(s)$ for $F(s)\colon x\mapsto F(s,x)$, 
and analogous notation for functions or distributions on $\R^d$ depending on the parameter $s\in\C$. The subsequent Proposition \ref{prop:cs_parametrix}
and Corollary \ref{cor:rs} are the key technical results that we need here.
\begin{proposition}\label{prop:cs_parametrix}
There exists a family of symbols $c_s\in S^{-m^\prime,-\mu^\prime}(\R^d\times \R^d)$ such that, for any $s\in \C_\lambda=\{s\in\C\colon \Re s>\lambda>0\}$,
$\lambda$ sufficiently large, $\Op(c_s)$ is a parametrix of $\Op(b_s)$, that is,
\begin{equation}\label{eq:cs_par}
	\Op(b_s)\Op(c_s)=I+\Op(r_{1s}), \quad \Op(c_s)\Op(b_s)=I+\Op(r_{2s}),
\end{equation}
for suitable $r_{1s}, r_{2s}\in S^{-\infty, -\infty}(\R^d\times \R^d)$. More precisely, there exist symbols 
$Q_j\in S^{(j+2)m-j-1, (j+2)\mu-j-1}$, $j\in \N$, independent of $s$, such that $c_s$ is given by the asymptotic sum
\begin{equation}
\label{eq:cs_asymptoticsum}
c_s(x,\xi)\sim \frac{1}{s^r+a(x,\xi)} + \sum_{j\in\N} \frac{Q_j(x,\xi)}{[s^r+a(x,\xi)]^{j+3}} = \sum_{j\in\N}\frac{A_j(x,\xi)}{[s^r+a(x,\xi)]^{j+1}},
\end{equation}
$A_0\equiv 1$, $A_1\equiv0$, $A_j=Q_{j-2} \in S^{jm-j+1,j\mu-j+1}$, $j\ge2$, and satisfies, for any $k\in \N$, the estimates
\[ 
	||| c_s|||^{-m^\prime,-\mu^\prime}_k\leq C_k,
\]
for suitable constants $C_k>0$ independent of $s\in\C_\lambda$ and the SG-seminorms $|||\cdot|||^{\ell,\rho}_k$, $\ell,\rho\in\R$, $k\in\N$
(see Remark \ref{rem:sgseminorms} below). In particular, for any $j\in \N$, $j\ge2$, the symbol $A_j$ admits an asymptotic expansion 
of the form
\begin{equation}
\label{eq:asymptotic_Aj} 
A_j\sim \sum_{|\theta|>j-2}\tilde{P}_{j}^{\theta\theta},
\end{equation}
where
\begin{equation*}
\tilde{P}_{j}^{\theta\theta}\in \vspan\left\{\partial^{\theta_1}_x\partial^{\sigma_1}_\xi a(x,\xi)\cdots\partial^{\theta_j}_x\partial^{\sigma_j}_\xi a(x,\xi)
		\colon \sum_{k=1}^j\theta_k=\sum_{k=1}^j\sigma_k=\theta \right\}.
\end{equation*}
\end{proposition}
\begin{remark}\label{rem:sgseminorms}
In Proposition \ref{prop:cs_parametrix}, for any $\ell, \rho\in \R$ and $p\in S^{\ell, \rho}(\R^d\times \R^d)$, we are considering the family of seminorms
\[ |||p|||^{\ell,\rho}_k=\sup_{|\alpha|+|\beta|\leq k}\sup_{(x,y)\in \R^{2d}}|\partial_x^\alpha \partial_\xi^\beta p(x,\xi)|\<x\>^{-\ell+|\alpha|}\<\xi\>^{-\rho+|\beta|},\]
with $k\in \N$, which defines a Fr\'echet topology on $S^{\ell,\rho}(\R^d\times \R^d)$ (notice that these seminorms are actually norms; see, e.g.,
\cite[Section 2]{CGP2} and \cite[Ch. 1]{Cord}).
\end{remark}
\begin{cor}\label{cor:rs}
	The symbols $r_{1s}$ and $r_{2s}$ of the remainders in \eqref{eq:cs_par} satisfy 
	\begin{align*}
		\forall M\in\N\;\forall\alpha,\beta\in\N^d\; &\exists C_{\alpha\beta}>0 \; \forall x,\xi\in\R^d\;\forall s\in\C_\lambda\;\;
		\\
		&
		|\partial^\alpha_\xi\partial^\beta_x r_{1s}(x,\xi)|\le |s|^{-rM}\csi^{-M-|\alpha|}\x^{-M-|\beta|}
		\\
		\text{ and }\;
		&|\partial^\alpha_\xi\partial^\beta_x r_{2s}(x,\xi)|\le |s|^{-rM}\csi^{-M-|\alpha|}\x^{-M-|\beta|},
	\end{align*}
	with $\lambda>0$ sufficiently large.
	We write $r_{1s},r_{2s} \in |s|^{-\infty}S^{-\infty,-\infty}(\R^d\times\R^d)$. It follows that the corresponding kernels $k_{1}(s,x,y)$ and $k_2(s,x,y)$
	of $\Op(r_{1s})$ and $\Op(r_{2s})$, respectively, satisfy analogous estimates in $(s,x,y)\in\C_\lambda\times\R^d\times\R^d$, and are analytic
	functions on $\C_\lambda$ taking values in $\scS(\R^d\times\R^d)$, with $\lambda>0$ sufficiently large.
\end{cor}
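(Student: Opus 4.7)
The plan is to refine the parametrix construction in Proposition~\ref{prop:cs_parametrix} so as to track explicitly the dependence on $|s|$, exploiting the fact that each factor of $b_s^{-1}$ appearing in the expansion simultaneously supplies SG-decay (via hypoellipticity) and $|s|^{-r}$-decay (via the lower bound $|b_s(x,\xi)|\ge |s|^r\cos(r\pi/2)$ from Remark~\ref{rem:bs_hypoelliptic}). Concretely, I would first establish, for any $\theta\in[0,1]$, the weighted interpolation
\[
|b_s(x,\xi)^{-1}|\lesssim |s|^{-r\theta}\jap{x}^{-m'(1-\theta)}\jap{\xi}^{-\mu'(1-\theta)},
\qquad s\in\C_\lambda,\ (x,\xi)\in\R^{2d},
\]
together with the analogous estimates for its derivatives via Lemma~\ref{lem:deriv}. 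This weighted trade-off will be the main workhorse.

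Starting from $\Op(r_{1s})=\Op(b_s)\Op(c_s)-I$, I would split, for any $N\in\N$,
\[
r_{1s}=b_s\#\Big(c_s-\sum_{j\le N}c_{js}\Big)+e_{Ns},
\]
with $e_{Ns}\sim\sum_{j\ge0}Q_{N,j}/b_s^{j+N+2}$ as produced in step v) of Proposition~\ref{prop:cs_parametrix}. Since the $Q_{N,j}$ are independent of $s$ and lie in explicit SG-classes, applying the weighted estimate above to the $j+N+2$ factors of $b_s^{-1}$ with interpolation parameter $\theta=M/(j+N+2)$ extracts a factor $|s|^{-rM}$ while still leaving sufficient hypoelliptic decay to place $Q_{N,j}/b_s^{j+N+2}$ into $|s|^{-rM}S^{-K,-L}$ whenever $N$ is chosen large relative to $M,K,L$. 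The tail $c_s-\sum_{j\le N}c_{js}$ is treated identically, since each $c_{js}=A_j/b_s^{j+1}$ has the same structure, and the asymptotic sums close uniformly in $s\in\C_\lambda$ thanks to the uniform seminorm bounds established in Proposition~\ref{prop:cs_parametrix}. Combined with the Leibniz formula for $\partial^\alpha_\xi\partial^\beta_x$, this yields the required estimate on $r_{1s}$ for every $M\in\N$ and $\alpha,\beta\in\N^d$; the argument for $r_{2s}$ is identical via the left parametrix.

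For the kernel statement, I would represent $k_1(s,x,y)=(2\pi)^{-d}\int e^{i(x-y)\xi}r_{1s}(x,\xi)\,d\xi$, and iterate integration by parts against $\jap{x-y}^{-2}(1-\Delta_\xi)$ to convert the already established rapid decay of $r_{1s}$ in $\xi$ into rapid decay of $k_1$ in $x-y$, uniformly with the $|s|^{-rM}$ factors; differentiating under the integral in $x$ and combining with the analogous bounds on $\partial^\beta_x r_{1s}$ yields the Schwartz-valued estimates. Analyticity of $s\mapsto k_1(s,\cdot,\cdot)$ on $\C_\lambda$ then follows from the holomorphy of $s\mapsto b_s^{-1}$ (hence of the asymptotic sums defining $c_s$ and $r_{1s}$) together with the Cauchy integral formula, where the interchange of integration and differentiation is justified by the uniform bounds already obtained.

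The main obstacle I foresee is the inductive bookkeeping in the estimates for $e_{Ns}$: one must simultaneously control the SG-order of the numerators $Q_{N,j}$, the number of $|s|^{-r}$ factors extracted, and the distribution of derivatives via the Leibniz formula and Lemma~\ref{lem:deriv}. The interpolation parameter $\theta$ must be chosen level-by-level so as to satisfy the $|s|^{-rM}$, $\jap{x}^{-M-|\beta|}$, and $\jap{\xi}^{-M-|\alpha|}$ requirements simultaneously, while ensuring that the resulting constants do not explode as $j\to\infty$ so that the asymptotic sums remain summable uniformly in $s\in\C_\lambda$; this balance is where the argument demands care.
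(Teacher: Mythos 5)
Your overall architecture matches the paper's: reduce to the remainders $e_{Ns}$ from step v) of Proposition \ref{prop:cs_parametrix}, extract $M$ powers of $|s|^{-r}$ from the powers of $b_s^{-1}$, and finish with standard kernel and analyticity arguments. The gap is in the central quantitative device. Your ``weighted interpolation'' $|b_s(x,\xi)^{-1}|\lesssim|s|^{-r\theta}\jap{x}^{-m^\prime(1-\theta)}\jap{\xi}^{-\mu^\prime(1-\theta)}$ exploits only the lower bound \ref{hyp:a_hypoellyptic}, and you then feed in $Q_{N,j}$ through its SG class $S^{(j+N+2)m-j-N-1,(j+N+2)\mu-j-N-1}$. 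With $\theta=M/(j+N+2)$ per factor, the resulting bound for $Q_{N,j}/b_s^{j+N+2}$ is $|s|^{-rM}\jap{x}^{(j+N+2)(m-m^\prime)-(j+N+1)+m^\prime M}\jap{\xi}^{(j+N+2)(\mu-\mu^\prime)-(j+N+1)+\mu^\prime M}$: the gain per factor of $b_s^{-1}$ is only $m^\prime(1-\theta)$ against a cost of $m$ per factor in the order of $Q_{N,j}$, so the term-by-term orders behave like $(m-m^\prime-1)j+\mathrm{const}$. As soon as $m\ge m^\prime+1$ or $\mu\ge\mu^\prime+1$ --- squarely allowed by \ref{hyp:a_symbol}--\ref{hyp:a_derivatives}, and exactly the hypoelliptic-but-not-elliptic regime the corollary must cover --- these orders do not tend to $-\infty$, the series is not an SG-asymptotic expansion, and no choice of $N$ closes the argument. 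What you are missing is that the decay of $Q_{N,j}/b_s^{j+N+2}$ does not come from the pointwise size of $b_s^{-1}$ but from \emph{pairing} each derivative-of-$a$ factor inside $Q_{N,j}$ with one power of $b_s$ in the denominator via the ratio estimates \eqref{eq:bsders}: each pair contributes $\jap{x}^{-|\theta_i|}\jap{\xi}^{-|\sigma_i|}$, cancelling a full $m$ (resp.\ $\mu$), not $m^\prime(1-\theta)$.

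The paper's mechanism is cleaner than interpolation: one shows that $s^{rM}/b_s^{M}\in S^{0,0}$ uniformly in $s\in\C_\lambda$ (the zeroth-order bound is just $|s^r|\le|b_s|$, and the derivatives are controlled by $|\partial a|/a$ through \ref{hyp:a_derivatives}), then writes $s^{rM}e_{Ns}\sim\sum_j (s^{rM}/b_s^{M})\cdot\bigl(Q_{N,j}/b_s^{j+N-M+2}\bigr)$ and estimates the second factor by Lemma \ref{lem:deriv}, obtaining order $Mm-j-N-1$ (resp.\ $M\mu-j-N-1$), which is $<-M-j$ for $N>\max\{2mM,2\mu M\}+2M$. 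This extracts $|s|^{-rM}$ at zero cost in SG orders. Your kernel and analyticity arguments are standard and fine once the symbol estimates are secured; note only that the tail $c_s-\sum_{j\le N}c_{js}$ must come from an $s$-independent excision in the asymptotic summation for the uniform bounds (and hence the $|s|$-decay of its composition with $\Op(b_s)$) to propagate --- a point worth making explicit in either approach.
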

The detailed proof of Proposition \ref{prop:cs_parametrix} and Corollary \ref{cor:rs} are given in Section 3 of \cite{CGP2}. %Appendix \ref{app:cs_parametrix}.
By means of the parametrix $\Op(c_s)$ obtained there, we are now able to get the representation of the solution $u=u(t,x)$ 
to \eqref{eq:CPmain} in terms of Mittag-Leffler functions (see Section \ref{subs:M-L}) %(see Appendix \ref{subs:M-L}) 
claimed in Theorem \ref{thm:main}. 
%
%\begin{thm}\label{thm:main}
%In the Cauchy problem \eqref{eq:CPmain}, assume $f=0$ and $u_0\in H^{\ell,\rho}(\R^d)$, and let $a$ satisfy assumptions \ref{hyp:a_symbol}, \ref{hyp:a_hypoellyptic} and \ref{hyp:a_derivatives}. Moreover, assume that
%%
%\begin{enumerate}[label=\textbf{(H\arabic*)},ref=\textbf{(H\arabic*)}]
%\setcounter{enumi}{3}
%\item \label{hyp:arga} either $r\leq1/2$, or $r>1/2$ and $-(1-r)\pi < \arg(a(x,\xi))< (1-r)\pi$, for all $x,\xi\in\R^d$, or $a(x,\xi)\in(0,+\infty)$, for all $x,\xi\in\R^d$. 
%\end{enumerate}
%
%%
%Then the Cauchy problem \eqref{eq:CPmain} admits a unique solution 
%%
%\[
%	u\in C([0,+\infty),H^{\ell,\rho}(\R^d))\cap C((0,+\infty),H^{r+m^\prime,\rho+\mu^\prime}(\R^d)),
%\]
%% 
%given, \red{modulo $C([0,+\infty),\scS(\R^d))$,} by
%%
%\begin{equation}\label{eq:reprform_u}
%u(t)=\Op(K(t))u_0,
%\end{equation}
%%
%where 
%%
%\begin{align}
%K(t,x,\xi)&\sim \sum_{j\in \N} \frac{t^{jr}}{j!} A_j(x,\xi) E^{(j)}_{r,1}(-t^r a(x,\xi)), \quad t\in [0,\infty), x,\xi\in \R^d,
%\end{align}
%%
%satisfies 
%%
%\[
%	K\in C([0,+\infty),S^{0,0}(\R^d\times\R^d)) \cap C((0,+\infty), S^{-m^\prime,-\mu^\prime}(\R^d\times\R^d)).
%\]
%\end{thm}
%
\begin{proof}[of Theorem \ref{thm:main}]
Let us first prove uniqueness. Let 
\[
	u_1,u_2\in C([0,+\infty),H^{\ell,\rho})\cap C((0,+\infty),H^{\ell+m^\prime,\rho+\mu^\prime})
\]
be two solutions of \eqref{eq:CPmain}. \\
Applying the Laplace transform, and denoting $U_j(s,x)=\cL_{t\to s}(u_j(t,x))$, $j=1,2$, both $U_1$ and $U_2$
satisfy \eqref{eq:equation_Laplace_transform} on some half-plane $\C_\lambda$. Setting $W=U_1-U_2$, it follows 
\[
	(s^r+\Op(a))W(s,x)=0\Leftrightarrow \Op(a)W(s)=(-s^r)W(s),
\]
that is, $W(s)$ is an eigenvector of the (closable) linear operator $\Op(a)\colon\scS\subset L^2\to L^2$, associated with the eigenvalue $\kappa=-s^r$, $\Re s>\lambda$.
By adapting \cite[Ex. 4]{Epstein} (cf. also \cite[p. 237--238]{AgrMar89}), taking into account that $a(x,\xi)>0$, it follows that 
\begin{equation}\label{eq:eigenvhalfplane}
	\Op(a)W=\kappa W\Rightarrow \Re\kappa\ge -c,
\end{equation}
for some constant $c\in\R$. Indeed, $H^{\ell,\rho}$, $\ell,\rho\in\R$, is a family of interpolation spaces 
(see \cite{Epstein}). Moreover, we can write $\Op(a)=\Op^w(a)+\Op(a_1)$, $a_1\in S^{m-1,\mu-1}$, and
\begin{align*}
	\Op(a)=\underbrace{\frac{\Op(a)+\Op(a)^*}{2}}_{=\Op(a_0)%=A_{\mathrm{sa}}
	\text{, selfadjoint part of $\Op(a)$}}&\mod\Op(S^{m-1,\mu-1})\\
& =\Op^w(a)\mod\Op(S^{m-1,\mu-1}).
\end{align*}
Notice that also $\Op^w(a)$ is selfadjoint, since $a$ is real-valued (see, e.g., \cite[Prop. 1.2.11]{NiRo}). Since $\Op^w(a)$ is bounded from below 
(see, e.g., \cite[Lemma 4.2.9]{NiRo}), by adding a suitable constant $K$, it becomes nonnegative. Then, we can apply \cite[Theorem 2]{Epstein}, 
with $L=\Op^w(a)+K$, $T=\Op(a_1)-K$, and obtain \eqref{eq:eigenvhalfplane}. Since 
\[
	\Re s>\left(\dfrac{c}{\cos\frac{r\pi}{2}}\right)^\frac{1}{r}\Rightarrow\Re (-s^r)<-c,
\]
taking $s\in\C_\lambda$ with $\lambda$ large enough implies $W\equiv0\Leftrightarrow U_1\equiv U_2$ on $\C_\lambda$. By inverse Laplace transform
(see, e.g., \cite[Theorem 4.14]{CGP2}), 
%(see Theorem \ref{thm:LTinv} in Appendix \ref{subs:LTandpsidos}, 
we conclude $u_1\equiv u_2$, as claimed.

Concerning existence and the representation formula \eqref{eq:reprform_u}, with $u\in C((0,\infty),\scSp)$,
applying $\Op(c_s)$ to both sides of \eqref{eq:equation_Laplace_transform} with $F=0$, we find
\begin{equation}
\label{eq:U(s)}
U(s)=\Op\big(s^{r-1}c_s\big)u_0-\Op\big(r_{2s}\big)U(s).
\end{equation}
As a consequence, we may write 
\begin{equation}\label{eq:solhom}
	u(t)=\mathcal{L}_{s\to t}^{-1}\big(\Op\big(s^{r-1}c_s)u_0)+\mathcal{L}_{s\to t}^{-1}\big(\Op\big(r_{2s}) U(s)).
\end{equation}

Set
\[
	G(s,x)=[\Op(r_{2s})U(s)](x)=[U(s)](k_2(s,x,\cdot)),
\]
where $k_2(s)\in\scS$ is the Schwartz kernel of $\Op(r_{2s})$, which,
by Corollary \ref{cor:rs}, is rapidly decaying with respect to $|s|$ and analytic with respect to $s\in\C_\lambda$. Then, $G(s)\in\scS$,
and is rapidly decaying with respect to $|s|$ and analytic with respect to $s\in\C_\lambda$ as well. 
By the properties of the inverse Laplace transform (see, e.g., \cite[Theorem 4.14]{CGP2}), % By Theorem \ref{thm:LTinv}, 
it follows that $\cL^{-1}_{s\to t}(G(s))\in C^\infty([0,+\infty)_t,\scS)$.

In view of \eqref{eq:laplace_tranform_MIttagLeffler}, we also obtain
\begin{align*}
\mathcal{L}_{s\to t}^{-1}\big(\big(\Op\big(s^{r-1}c_s\big)\big)u_0&= \Op\Big(\mathcal{L}^{-1}_{s\to t}\big(s^{r-1}c_s\big)\Big)u_0\\
&\sim \Op\bigg(\mathcal{L}^{-1}_{s\to t}\bigg(\sum_{j\in \N}s^{r-1}c_{js}\bigg)\bigg)u_0\\
&= \Op\bigg(\sum_{j\in \N}A_j \, \mathcal{L}^{-1}_{s\to t}\bigg(\frac{s^{r-1}}{(s^r+a)^{j+1}}\bigg)\bigg)u_0\\
&= \Op(K(t))u_0,
\end{align*}
where 
\[ 
	K_0(t,x,\xi) \sim \sum_{j\in \N} \frac{t^{jr}}{j!} A_j(x,\xi) E^{(j)}_{r,1}(-t^r a(x,\xi)).
\]
It is immediate to see that $K_0(0)\in S^{0,0}$.
The claim then follows by the subsequent Lemma \ref{lem:Kt}, where we show that, for any $t>0$, $K_0(t)=K_0(t,x,\xi)$ is a symbol in $S^{-m^\prime,-\mu^\prime}$.
\end{proof}
\begin{lemma}\label{lem:Kt}
The family of symbols $K_0(t)$, $t\in[0,+\infty)$, satisfies 
\[
	K_0\in C([0,\infty),S^{0,0}(\R^d\times\R^d))\cap C((0,\infty), S^{-m^\prime,-\mu^\prime}(\R^d\times\R^d)).
\]

\end{lemma}
\begin{proof}
Employing Lemma \ref{lem:integral_representation_MittagLeffler} we can write
\begin{align}
E_{r,1}(-t^r a(x,\xi))= F_{r,1}(-t^ra(x,\xi)),
\end{align}
where 
\begin{equation}
\label{eq:Fr1}
\begin{aligned} 
&F_{r,1}(-t^ra(x,\xi))\\
& =\frac{a(x,\xi)\sin(r\pi)}{\pi r
 }\int_0^\infty e^{-\tau^\frac{1}{r}}\frac{t^r }{\tau^2+2\tau t^r a(x,\xi)\cos(r\pi )+t^{2r}a(x,\xi)^2}\,d\tau.
\end{aligned}
\end{equation}

Then, we have proved
\begin{equation}\label{eq:Kasexp}
	K_0(t,x,\xi) \sim \sum_{j\geq 0} \frac{t^{jr}}{j!} A_j(x,\xi) F^{(j)}_{r,1}(-t^r a(x,\xi)),
\end{equation} 
where $F_{r,1}$ is defined by \eqref{eq:Fr1}, 
and, for  any $j\geq 1$, the function $F^{(j)}_{r,1}(-t^r a(x,\xi))$ is the derivative of order $j$ of $F_{r,1}$ evaluated in $-t^r a(x,\xi)$.
We now show that \eqref{eq:Kasexp} indeed provides an asymptotic expansion for $K(t)$, which will conclude the proof of our claim.
In fact:
\begin{itemize}
	\item[-] the computations below also show that all the terms in the expansion \eqref{eq:Kasexp} have the desired continuity properties,
	with respect to $t\in[0,\infty)$, respectively $t\in(0,+\infty)$, to the corresponding symbol spaces;
	\item[-] by standard arguments (see, e.g., \cite[\S\ 2.1]{StRay}), such $t$-continuity properties extend from the elements 
	of the expansion to the asymptotic sum.
\end{itemize}
 
%Observe that, for any $j\in \N$, we have
%%
%\[ F^{(j)}_{r,1}(-t^r a(x,\xi))=\frac{\sin(r\pi)}{\pi}G^{(j)}(-t a(x,\xi)^\frac{1}{r}).\]
%
\smallskip 

Set $Q_R=\{(x,\xi)\in\R^d\times\R^d\colon |x|+|\xi|\le R\}$, $R>0$, and choose a cut-off function $\chi\in C_0^\infty(\R^d\times\R^d)$
such that $0\le\chi\le1$, $\supp\chi\subseteq Q_{2R}$, and $\chi|_{Q_{3R/2}}\equiv1$. Set also
\[
	K_{jc}(t,x,\xi)=\frac{t^{jr}}{j!} A_j(x,\xi) F^{(j)}_{r,1}(-t^r a(x,\xi))\chi(x,\xi)
\] and
\[
	K_{j\infty}(t,x,\xi)=\frac{t^{jr}}{j!} A_j(x,\xi) F^{(j)}_{r,1}(-t^r a(x,\xi))(1-\chi(x,\xi)),
\]
so that
\[
	K_0(t,x,\xi)\sim\sum_{j\geq 0} K_{jc}(t,x,\xi) + \sum_{j\ge 0} K_{j\infty}(t,x,\xi)].
\]
Since, for any $j\in\N$, $K_{jc}\in C([0,\infty), C_0^{\infty})\subset C([0,\infty), S^{-\infty,-\infty})$, it follows
\[
	\sum_{j\geq 0} K_{jc}(t,x,\xi)\in C([0,\infty), S^{-\infty,-\infty})\subset 
	C([0,\infty),S^{0,0})\cap C((0,\infty), S^{-m^\prime,-\mu^\prime}).
\]
So, we need only to show that the $K_{j\infty}$, $j\in\N$, provide an SG-asymptotic sum of the claimed order for $t\in[0,+\infty)$ and
$t\in(0,+\infty)$, respectively. In particular, since for any $\alpha,\beta\in\N^d$
such that $|\alpha|+|\beta|\ge1$ we have $\partial^\alpha_x\partial^\beta_\xi(1-\chi(x,\xi))  \in C_0^\infty\subset S^{-\infty,-\infty}$,
it is enough to estimate the derivatives only of the other factors on the support of $1-\chi(x,\xi)$, that is, for $|x|+|\xi|\ge R$.
Applying Fa\`a di Bruno' formula, we know that for any $\alpha, \beta\in \Z_+^d$ with $|\alpha|+|\beta|\geq 1$ it holds
\begin{equation*}
\begin{aligned}
&\partial_x^\alpha\partial_\xi^\beta \left(F_{r,1}^{(j)}(-t^ra(x,\xi)\right) \\
& = \!\!\!\!
\sum_{\ell=1}^{|\alpha|+|\beta|}  t^{\ell r}F^{(j+\ell)}_{r,1}\left(-t^r a(x,\xi)\right) \!\!\!\!\!\sum_{\substack{\alpha_1+\dots +\alpha_\ell=\alpha\\ \beta_1+\dots +\beta_\ell=\beta}} C^{\beta_1,\dots,\beta_\ell}_{\alpha_1, \dots, \alpha_\ell}\partial_{x}^{\alpha_1} \partial_\xi^{\beta_1}a(x,\xi)\cdots \partial_{x}^{\alpha_\ell} \partial_\xi^{\beta_\ell}a(x,\xi)
\end{aligned}
\end{equation*}
for suitable constants $C^{\beta_1,\dots,\beta_\ell}_{\alpha_1, \dots, \alpha_\ell}\in \R$. Then, applying Lemma \ref{lem:Falpha,beta} and assumptions \ref{hyp:a_hypoellyptic}, \ref{hyp:a_derivatives}, on $\supp(1-\chi(x,\xi))$ we may estimate
\begin{equation*}
\begin{aligned}
& \left|\partial_x^\alpha \partial_\xi^\beta  \left(F_{r,1}^{(j)}(-t^ra(x,\xi)\right)\right| \\
& \lesssim \sum_{\ell=1}^{|\alpha|+|\beta|} \!\!\! \frac{ 1}{(1+t^ra(x,\xi)) t^{jr}a(x,\xi)^j}   \!\!\!\sum_{\substack{\alpha_1+\dots +\alpha_\ell=\alpha\\ \beta_1+\dots +\beta_\ell=\beta}} \frac{|\partial_{x}^{\alpha_1} \partial_\xi^{\beta_1}a(x,\xi)|}{a(x,\xi)}\dots \frac{|\partial_{x}^{\alpha_\ell} \partial_\xi^{\beta_\ell}a(x,\xi)|}{a(x,\xi)}  \\
& 	\lesssim  \sum_{\ell=1}^{|\alpha|+|\beta|}\frac{t^{-jr}a(x,\xi)^{-j}}{1+t^r a(x,\xi)}  \sum_{k=\ell}^{|\alpha|+|\beta|}\sum_{\substack{\alpha_1+\dots +\alpha_k=\alpha\\ \beta_1+\dots +\beta_k=\beta}}\<x\>^{-|\alpha_1|} \<\xi\>^{-|\beta_1|}\dots \<x\>^{-|\alpha_\ell|} \<\xi\>^{-|\beta_\ell|}\\
& \lesssim\frac{t^{-jr}a(x,\xi)^{-j}}{1+t^r a(x,\xi)} \<x\>^{-|\alpha|}\<\xi\>^{-|\beta|}.
%\min\{1,ta(x,\xi)^\frac{1}{r}\} .
\end{aligned}
\end{equation*}

Now, let us fix $N>j\max\{m-m^\prime+1,\mu-\mu^\prime+1\}-1$. Employing the asymptotic expansion \eqref{eq:asymptotic_Aj} of $A_j$, 
for any $\alpha,\beta \in \Z_+^d$ and $j\geq 2$ we can write
\begin{equation*}
 \partial_x^\alpha \partial_\xi^\beta  A_j=  \sum_{|\theta|=j-1}^N  \partial_x^\alpha \partial_\xi^\beta \tilde{P}_{j}^{\theta\theta}+ \partial_x^\alpha \partial_\xi^\beta \tilde R_{N},
\end{equation*}
where $ \partial_x^\alpha \partial_\xi^\beta \tilde{P}_{j}^{\theta\theta}$ belongs to
\[ \vspan\left\{\partial^{\theta_1}_x\partial^{\sigma_1}_\xi a(x,\xi)\cdots\partial^{\theta_j}_x\partial^{\sigma_j}_\xi a(x,\xi)
		\colon \sum_{k=1}^j\theta_k=\theta+\alpha, \; \sum_{k=1}^j\sigma_k=\theta+\beta \right\},\]
and $\partial_x^\alpha \partial_\xi^\beta \tilde{R}_N\in S^{jm-N-|\alpha|,j\mu-N-|\beta|}$. 
Our choice of $N$, together with assumption \ref{hyp:a_derivatives}, allows to estimate, on $\supp(1-\chi(x,\xi))$,
\begin{align*}
|t^{jr}&\partial_x^\alpha \partial_\xi^\beta \big(A_j(x,\xi)F_{r,1}^{(j)}(-t^r a(x,\xi))\big)|\\
& \lesssim t^{jr} \sum_{\substack{\alpha_1+\alpha_2=\alpha\\\beta_1+\beta_2=\beta}} |\partial_x^{\alpha_1}\partial_\xi^{\beta_1}  A_j(x,\xi) ||\partial_x^{\alpha_2}\partial_\xi^{\beta_2}  (F_{r,1}^{(j)}(-t^r a(x,\xi)))|\lesssim\frac{ 1}{1+t^r a(x,\xi)} \\
& \times  \sum_{\substack{\alpha_1+\alpha_2=\alpha\\\beta_1+\beta_2=\beta}} \<x\>^{-|\alpha_2|}\<\xi\>^{-|\beta_2|} 
\bigg(\sum_{|\theta|=j-1}^N 
\sum_{\substack{\theta_1+\dots +\theta_j=\theta+\alpha_1\\ \sigma_1+\dots +\sigma_j=\theta+\beta_1}} 
\frac{|\partial_{x}^{\theta_1} \partial_\xi^{\sigma_1}a(x,\xi)|}{ a(x,\xi)}  \cdots \\
 & \hspace{150pt} \cdots \frac{|\partial_{x}^{\theta_j} \partial_\xi^{\sigma_j}a(x,\xi)|}{a(x,\xi)} 
+\frac{|\partial_x^{\alpha_1} \partial_\xi^{\beta_1} R_N(x,\xi)|}{a(x,\xi)^j}\bigg)\\
& \lesssim \frac{\<x\>^{-j+1-|\alpha|}\<\xi\>^{-j+1-|\beta|}}{1+t^r a(x,\xi)},
\end{align*}
for any $t\geq 0$ and $j\geq 2$ (recall that $A_0\equiv 1$ and $A_1\equiv 0$). 
This proves our claim that \eqref{eq:Kasexp} provides an asymptotic expansion in the SG-calculus for $K(t)$. 
Indeed, for any $\alpha, \beta\in \N^d$ and $t\geq 0$, we obtain, recalling that $a$ is non-negative,
\begin{align*}
|\partial_x^\alpha\partial_\xi^\beta K_{0\infty}(t,x,\xi)| &\le C_{0\alpha\beta} \frac{\<x\>^{-|\alpha|}\<\xi\>^{-|\beta|}}{1+t^ra(x,\xi)}
\le C_{0\alpha\beta} \<x\>^{-|\alpha|}\<\xi\>^{-|\beta|},
\\
|\partial_x^\alpha\partial_\xi^\beta K_{j\infty}(t,x,\xi)| &\le C_{j\alpha\beta} \frac{\<x\>^{1-j-|\alpha|}\<\xi\>^{1-j-|\beta|}}{1+t^ra(x,\xi)}
\le C_{j\alpha\beta} \<x\>^{1-j-|\alpha|}\<\xi\>^{1-j-|\beta|}, 
\end{align*}
for any $j\ge2,$ for some constants $C_{j\alpha\beta}>0$, $j\in\N\setminus\{1\}$, independent of $t\geq 0$, and arbitrary $x,\xi\in \R^d$. 
We conclude that, for any $t\ge0$,  it holds
\begin{equation*}
	K_\infty(t,x,\xi)\sim\sum_{j\ge0}K_{j\infty}(t,x,\xi) \in S^{0,0}.
\end{equation*}
Moreover, for any $t>0$ arbitrarily small, in view of assumption \ref{hyp:a_hypoellyptic}, for any $\alpha, \beta\in \N^d$ we may likewise estimate
\begin{align*}
|\partial_x^\alpha\partial_\xi^\beta K_{0\infty}(t,x,\xi)| &\le C_{0\alpha\beta} \frac{\<x\>^{-|\alpha|}\<\xi\>^{-|\beta|}}{1+t^ra(x,\xi)}
\le t^{-r} C_{0\alpha\beta} \<x\>^{-m^\prime-|\alpha|}\<\xi\>^{-\mu^\prime-|\beta|},
\\
|\partial_x^\alpha\partial_\xi^\beta K_{j\infty}(t,x,\xi)| &\le C_{j\alpha\beta} \frac{\<x\>^{1-j-|\alpha|}\<\xi\>^{1-j-|\beta|}}{1+t^ra(x,\xi)}
\\ & \le t^{-r} C_{j\alpha\beta} \<x\>^{-m^\prime+1-j-|\alpha|}\<\xi\>^{-\mu^\prime+1-j-|\beta|}, \quad j\ge2,
\end{align*}
and conclude that, for any $t>0$, it holds 
\begin{equation*}
	K_\infty(t,x,\xi)\sim\sum_{j\ge0}K_{j\infty}(t,x,\xi) \in t^{-r}S^{-m^\prime,-\mu^\prime}.
\end{equation*}
The proof is complete.
\end{proof}

We now prove our third main result, about the non-homogeneous Cauchy problem.
\begin{proof}[of Theorem \ref{thm:main_nhom}]
The uniqueness claim follows by an argument completely analogous to the one given to prove uniqueness of the solution 
for the homogeneous case $f\equiv 0$, in the proof of Theorem \ref{thm:main} above.

%Assume that $u\in C((0,\infty),\scSp)$ is the unique solution to \eqref{eq:CPmain}. 
To prove existence, as in the proof of Theorem \ref{thm:main}, with $u\in C((0,+\infty),\scSp)$, we 
apply $\Op(c_s)$ to both sides of \eqref{eq:equation_Laplace_transform}, obtaining
\begin{equation}
\label{eq:U(s)-nhom}
U(s)=\Op\big(s^{r-1}c_s\big)u_0-\Op\big(r_{2s}\big)U(s)+\Op(c_s)F(s),
\end{equation}
and, by inverse Laplace transform, we may then write 
\begin{equation}\label{eq:solnonhom}
	u(t)=\mathcal{L}_{s\to t}^{-1}\big(\Op\big(s^{r-1}c_s)u_0)+\mathcal{L}_{s\to t}^{-1}\big(\Op\big(r_{2s}) U(s)\big) +
	\mathcal{L}_{s\to t}^{-1}\big(\Op(c_s)F(s)\big).
\end{equation}
In \eqref{eq:solnonhom}, the first two summands are identical to those in \eqref{eq:solhom}. For the third term, 
recalling that $\Op(\cdot)$ and $\cL^{-1}$ commute 
(see \cite[Remark 4.18]{CGP2}), we obtain
%(see Appendix \ref{subs:LTandpsidos}, Remark \ref{rem:LTinv}), we obtain
%
\begin{align*}
[\cL^{-1}_{s\to t}(\Op(c_s)F(s))](x)& = \frac{1}{(2\pi)^d}\int_{\R^d} e^{ix\cdot \xi} \cL^{-1}_{s\to t}(c_s(x,\xi))\ast_{t} \widehat{f}(t, \xi)d\xi  \\
& =  \frac{1}{(2\pi)^d}\int_0^t \int_{\R^d} e^{ix\cdot \xi} \cL^{-1}_{s\to \tau}(c_s(x, \xi))\hat{f}(t-\tau,\xi) d\xi d\tau\\
& = \int_0^t  [\Op(\cL^{-1}_{s\to \tau}(c_s))f(t-\tau)](x) d\tau.
\end{align*}
In particular, by \eqref{eq:cs_asymptoticsum}, in view of \eqref{eq:laplace_tranform_MIttagLeffler} we find
\begin{align*}
K_1(t,x,\xi):= \mathcal{L}_{s\to t}^{-1}(c_s(x,\xi))&\sim \cL^{-1}_{s\to t}\bigg(\sum_{j\in \N}c_{js}(x,\xi)\bigg)\\
&=\sum_{j\in \N}A_j (x,\xi)\, \mathcal{L}^{-1}_{s\to t}\bigg(\frac{1}{(s^r+a(x,\xi))^{j+1}}\bigg) \\
& = \sum_{j\in \N} \frac{t^{jr+r-1}}{j!}A_j(x,\xi) E_{r,r}^{(j)}(-t^r a(x,\xi)).
\end{align*}
The desired claims follow by the subsequent Lemma \ref{lem:K1t}, which shows that
$K_1(t)$ is, uniformly with respect to $t\ge0$, a symbol in $S^{0,0}$, while, for $t>0$,
$K_1(t)$ is a symbol in $t^{-1+(1-p)r}\cdot S^{-pm^\prime,-p\mu^\prime}$, $p\in(0,2]$. Indeed,
the first two terms are in $H^{\ell+m^\prime,\rho+\mu^\prime}$ and $\scS\hookrightarrow H^{\ell+m^\prime,\rho+\mu^\prime}$, respectively, for $t>0$,
and continuously depending on $t$, so that they also belong to $C([0,+\infty),H^{\ell,\rho})$.
For the third term, since $k,\kappa>0$, choose, as it is possible, $p\in[0,1)$ such that
\begin{align*}
	\begin{cases}
		\ell - m^\prime + k + p m^\prime \ge \ell
		\\
		\rho - \mu^\prime + \kappa + p \mu^\prime \ge \rho
	\end{cases}
	&\Leftrightarrow
	\begin{cases}
		m^\prime p \ge m^\prime - k
		\\
		\mu^\prime p \ge \mu^\prime - \kappa 
	\end{cases} \\
&	\Leftrightarrow
	\begin{cases}
		\text{any $p$ if $m^\prime=0$} \vee p \ge 1-\dfrac{k}{m^\prime} \text{ if $m^\prime>0$}
		\\
		\text{any $p$ if $\mu^\prime=0$} \vee p \ge 1-\dfrac{\kappa}{\mu^\prime} \text{ if $\mu^\prime>0$}.\rule{0mm}{7mm}
	\end{cases}
\end{align*}
Then, by the above computations, in view of Lemma \ref{lem:K1t} and the immersion between the Sobolev-Kato spaces, it holds,
for a constant $B>0$ depending only on $\ell,\rho,m^\prime,\mu^\prime,k,\kappa,d$, a constant $K>0$, depending only on a finite
number of seminorms of $K_1(t)$ (see, e.g., \cite[Theorem 2.1]{CGP2} and \cite[Ch. 3]{Cord}) %see \eqref{eq:normsob}, 
and any $t\ge0$,
\begin{align*}
&	\left\|
	\int_0^t  \Op(\cL^{-1}_{s\to \tau}(c_s)) f(t-\tau) d\tau
	\right\|_{H^{\ell,\rho}}\\
& \le B
	\int_0^t  \left\|\Op(\cL^{-1}_{s\to \tau}(c_s))f(t-\tau)\right\|_{H^{\ell-m^\prime+k+p m^\prime,\rho-\mu^\prime+\kappa+p \mu^\prime}} d\tau
	\\
	&\le 
	B \int_0^t
	\|\Op(\cL^{-1}_{s\to \tau}(c_s))\|_{\cL(H^{\ell-m^\prime+k,\rho-\mu^\prime+\kappa}, H^{\ell-m^\prime+k+p m^\prime,\rho-\mu^\prime+\kappa+p \mu^\prime})}\\ & 
	\hspace{170pt} \times
	\|f(t-\tau)\|_{H^{\ell-m^\prime+k,\rho-\mu^\prime+\kappa}} \,d\tau
	\\
&	\le 
	B K \left(\max_{\tau\in[0,t]}\|f(\tau)\|_{H^{\ell-m^\prime+k,\rho-\mu^\prime+\kappa}}\right)\int_0^t \ta^{-1+(1-p)r}\,d\tau <\infty.
\end{align*}
We conclude $u\in C([0,+\infty), H^{\ell,\rho})$, as claimed. Similarly, if additionally $f\in C([0,+\infty),H^{\ell+\varepsilon,\rho+\varepsilon})$, $\varepsilon>0$
arbitrarily small, choose $p\in(0,1)$ such that
\begin{align*}
	\begin{cases}
		\ell + \varepsilon + p m^\prime \ge \ell + m^\prime
		\\
		\rho + \varepsilon + p \mu^\prime \ge \rho + \mu^\prime
	\end{cases}
&	\Leftrightarrow
	\begin{cases}
		m^\prime p \ge m^\prime - \varepsilon
		\\
		\mu^\prime p \ge \mu^\prime - \varepsilon 
	\end{cases}\\
&
	\Leftrightarrow
	\begin{cases}
		\text{any $p$ if $m^\prime=0$} \vee p \ge 1-\dfrac{\varepsilon}{m^\prime} \text{ if $m^\prime>0$}
		\\
		\text{any $p$ if $\mu^\prime=0$} \vee p \ge 1-\dfrac{\varepsilon}{\mu^\prime} \text{ if $\mu^\prime>0$}.\rule{0mm}{7mm}
	\end{cases}
\end{align*}
Then, as above, for suitable constants $B,K>0$ and any $t\ge0$,
\begin{align*}
	&\left\|
	\int_0^t  \Op(\cL^{-1}_{s\to \tau}(c_s))f(t-\tau) d\tau
	\right\|_{H^{\ell+m^\prime,\rho+\mu^\prime}}\\
&	\le B
	\int_0^t  \left\|\Op(\cL^{-1}_{s\to \tau}(c_s))f(t-\tau)\right\|_{H^{\ell+\varepsilon+p m^\prime,\rho+\varepsilon+\mu^\prime}} d\tau
	\\
&	\le 
	B \int_0^t
	\|\Op(\cL^{-1}_{s\to \tau}(c_s))\|_{\cL(H^{\ell+\varepsilon,\rho+\varepsilon}, H^{\ell+\varepsilon+p m^\prime,\rho+\varepsilon+\mu^\prime})}
	\cdot 
	\|f(t-\tau)\|_{H^{\ell+\varepsilon,\rho+\varepsilon}} \,d\tau
	\\
	& \le 
	B K (\max_{\tau\in[0,t]}\|f(\tau)\|_{H^{\ell+\varepsilon,\rho+\varepsilon}})\int_0^t \ta^{-1+(1-p)r}\,d\tau <\infty.
\end{align*}
We conclude $u\in C((0,+\infty), H^{\ell+m^\prime,\rho+\mu^\prime})$, as claimed.
\end{proof}
\begin{lemma}\label{lem:K1t}
The family of symbols $K_1(t)$, $t\in[0,+\infty)$, satisfies 
\[
	K_1\in C([0,\infty),S^{0,0}(\R^d\times\R^d))\cap C((0,\infty), S^{-pm^\prime,-p\mu^\prime}(\R^d\times\R^d)), \quad p\in(0,2].
\]
Moreover, for any $t\in(0,+\infty)$,
\[
	K_1\colon t\mapsto t^{-1+(1-p)r}\cdot S^{-pm^\prime,-p\mu^\prime}(\R^d\times\R^d), \quad p\in(0,2].
\]	
\end{lemma}
\begin{proof}
From Lemma \ref{lem:Falpha,beta} we know an integral representation of $E_{r,r}$. In particular, 
\[ E_{r,r}(-t^r a(x,\xi))=F_{r,r}(-t^r a(x,\xi)),\]
where
\begin{equation}
F_{r,r}(z):= \frac{ \sin(\pi r)}{\pi r }\int_0^\infty e^{-\tau^\frac{1}{r}}\frac{ \tau^{\frac{1}{r}}}{\tau^2-2\tau z\cos(\pi r)+z^2}\,d\tau.
\end{equation}
This allows to conclude
\begin{equation}\label{eq:Kasexp_nhom}
	K_1(t,x,\xi) \sim \sum_{j\geq 0} \frac{t^{jr+r-1}}{j!} A_j(x,\xi) F^{(j)}_{r,r}(-t^r a(x,\xi)),
\end{equation} 
where $F_{r,r}$ is defined by \eqref{eq:Fr1}, %$c$ is given by \eqref{eq:c}, 
and, for  any $j\geq 1$, the function $F^{(j)}_{r,r}(-t^r a(x,\xi))$ is the derivative of order $j$ of $F_{r,r}$ evaluated in $-t^r a(x,\xi)$.

Arguing as in the proof of Lemma \ref{lem:Kt}, for any $\alpha, \beta\in \Z_+^d$ with $|\alpha|+|\beta|\geq 1$ we can apply Fa\`a di Bruno' formula to obtain
\begin{equation*}
\begin{aligned}
& \partial_x^\alpha\partial_\xi^\beta \left(F_{r,r}^{(j)}(-t^ra(x,\xi))\right) \\
&  =
\sum_{\ell=1}^{|\alpha|+|\beta|}\!\!\!\!  t^{\ell r}F^{(j+\ell)}_{r,r}\left(-t^r a(x,\xi)\right) \!\!\!\sum_{\substack{\alpha_1+\dots +\alpha_\ell=\alpha\\ \beta_1+\dots +\beta_\ell=\beta}} \!\!\!\!\tilde C^{\beta_1,\dots,\beta_\ell}_{\alpha_1, \dots, \alpha_\ell} \; \partial_{x}^{\alpha_1} \partial_\xi^{\beta_1}a(x,\xi)\cdots \partial_{x}^{\alpha_\ell} \partial_\xi^{\beta_\ell}a(x,\xi)
\end{aligned}
\end{equation*}
for suitable constants $\tilde C^{\beta_1,\dots,\beta_\ell}_{\alpha_1, \dots, \alpha_\ell}\in \R$. Applying Lemma \ref{lem:Falpha,beta} and assumptions \ref{hyp:a_hypoellyptic}, \ref{hyp:a_derivatives}, we may estimate, on $\supp(1-\chi(x,\xi))$,
\begin{equation}
\begin{aligned}
& \left|\partial_x^\alpha\partial_\xi^\beta \left(F_{r,r}^{(j)}(-t^ra(x,\xi))\right)\right| \\ &\lesssim \sum_{\ell=1}^{|\alpha|+|\beta|}  \frac{ 1}{(1+t^ra(x,\xi))^2 t^{jr}a(x,\xi)^j}   \!\!\!\!\sum_{\substack{\alpha_1+\dots +\alpha_\ell=\alpha\\ \beta_1+\dots +\beta_\ell=\beta}}\!\!\!\! \frac{|\partial_{x}^{\alpha_1} \partial_\xi^{\beta_1}a(x,\xi)|}{a(x,\xi)}\dots \frac{|\partial_{x}^{\alpha_\ell} \partial_\xi^{\beta_\ell}a(x,\xi)|}{a(x,\xi)}  \\
& 	\lesssim  \sum_{\ell=1}^{|\alpha|+|\beta|}\frac{t^{-jr}a(x,\xi)^{-j}}{(1+t^r a(x,\xi))^2}  \sum_{k=\ell}^{|\alpha|+|\beta|}\sum_{\substack{\alpha_1+\dots +\alpha_k=\alpha\\ \beta_1+\dots +\beta_k=\beta}}\<x\>^{-|\alpha_1|} \<\xi\>^{-|\beta_1|}\dots \<x\>^{-|\alpha_\ell|} \<\xi\>^{-|\beta_\ell|}\\
& \lesssim \frac{t^{-jr}a(x,\xi)^{-j}}{(1+t^r a(x,\xi))^2}\<x\>^{-|\alpha|}\<\xi\>^{-|\beta|}.
%\min\{1,ta(x,\xi)^\frac{1}{r}\} .
\end{aligned}
\end{equation}
As in the proof of Lemma \ref{lem:Kt}, this allows to estimate, on $\supp(1-\chi(x,\xi))$,
\begin{equation*}
|t^{jr}\partial_x^\alpha \partial_\xi^\beta \big(A_j(x,\xi)F_{r,1}^{(j)}(-t^r a(x,\xi))\big)| \lesssim \frac{\<x\>^{-j+1-|\alpha|}\<\xi\>^{-j+1-|\beta|}}{(1+t^r a(x,\xi))^2},
\end{equation*} 
for any $t\geq 0$ and $j\geq 2$, where $A_j\in S^{jm-j+1,j\mu-j+1}$ admit the asymptotic expansion in  \eqref{eq:asymptotic_Aj}. 
%Since $c_s$ is given by the asymptotic sum
%%
%\begin{equation*}
%c_s(x,\xi)\sim \sum_{j\in\N}\frac{A_j(x,\xi)}{[s^r+a(x,\xi)]^{j+1}},
%\end{equation*}
%%
%with $A_0\equiv 1$, $A_1\equiv0$, we can conclude 
%%
%\blue{
%\begin{equation*}
%|\partial_x^\alpha\partial_\xi^\beta K_1(t,x,\xi)|\leq  
%C_{\alpha\beta} \<x\>^{-|\alpha|}\<\xi\>^{-|\beta|}(1+t^ra(x,\xi))^{-2},
%\end{equation*}
%%
%for some constant $\tilde C_{\alpha\beta}>0$ independent of $t\geq 0$, and arbitrary $x,\xi\in \R^d$. }
%In particular, 
It follows that, for any $\alpha, \beta \in \N^d$ and $t\geq 0$, we find
\begin{equation*}
|\partial_x^\alpha \partial_\xi^\beta K_1(t,x,\xi)|\lesssim \<x\>^{-|\alpha|}\<\xi\>^{-|\beta|}.
\end{equation*}
In view of assumption \ref{hyp:a_hypoellyptic}, 
\[
	(1+t^r a(x,\xi))^2\ge (1+t^r a(x,\xi))^p, \quad p\in(0,2],t\in[0,+\infty),x,\xi\in\R^d.
\]
It follows that, for any $t>0$, arbitrarily small, again in view of assumption \ref{hyp:a_hypoellyptic}, we may likewise estimate 
%%
%\begin{equation*}
%|\partial_x^\alpha \partial_\xi^\beta K_1(t,x,\xi)|\lesssim t^{-r-1} \<x\>^{-2m^\prime-|\alpha|}\<\xi\>^{-2\mu^\prime-|\beta|},
%\end{equation*}
%%
%so that it also holds, for any $t>0$, arbitrarily small,
%%
\[
	|\partial_x^\alpha \partial_\xi^\beta K_1(t,x,\xi)|\lesssim t^{-1+(1-p)r} \<x\>^{-pm^\prime-|\alpha|}\<\xi\>^{-p\mu^\prime-|\beta|},\quad p\in[0,2].
\] 
As in the proof of Theorem \ref{thm:main}, the above steps also allow to derive 
the continuous dependence of $K_1$ with respect to $t\in[0,+\infty)$ or $t\in(0,+\infty)$, respectively. 
This completes the proof.
\end{proof}

We conclude with a result about the singularities of the solution for the homogeneous problem \eqref{eq:CPmain} with respect to the singularities of the initial data,
in terms of the global wavefront sets introduced in \cite{CJT13b} (see also \cite{CJT16}). With the notation introduced therein, in view of the fact that for an 
SG-ordered pair of spaces $(\cB,\cC)$, a SG-operator $A\colon\cB\to\cC$, and $u\in\cB$, 
\[ 
	\WFF_\cC(Au)\subseteq\WFF_\cB(u)\subseteq \WFF_\cC(Au)\cup\Char A,
\]
the proof of the next Theorem \ref{thm:propsing} follows immediately by Theorem \ref{thm:main}, through the representation formula \eqref{eq:reprform_u}.
\begin{theorem}\label{thm:propsing}
	Let $(\cB,\cC)$ be a SG-ordered pair of spaces with respect to the weight $\omega_{-m^\prime,-\mu^\prime}(x,\xi)=\jap{x}^{-m^\prime}\jap{\xi}^{-\mu^\prime}$.
	Under the hypotheses of Theorem \ref{thm:main}, here assuming instead $u_0\in\cB$, it follows $u\in C((0,\infty),\cC)$ and
	\[ 
		\WFF_\cC(u(t))\subseteq\WFF_\cB(u_0), \quad t\in(0,+\infty).
	\]
\end{theorem}

 %%% example of subsection:

\section{Mittag-Leffler functions}\label{subs:M-L}
\setcounter{equation}{0}
Given $\alpha>0$ and $\beta\in \C$, we denote by $E_{\alpha,\beta}(z)$ the Mittag-Leffler function with parameters $\alpha$ and $\beta $ defined by
\[ 
	E_{\alpha,\beta}(z)=\sum_{k=0}^\infty \frac{z^k}{\Gamma(\alpha k+\beta)}.
\]
The function $E_{\alpha,\beta}$ plays a fundamental role in the theory of fractional calculus. Concerning the Laplace transform of $E_{\alpha,\beta}$ and its derivatives, the following formula can be useful (cf. equation (1.10.10) in \cite{Kilbas}): for any $\alpha>0$, $\beta,\mu \in \C$, and $j\in \N$, it holds
\begin{equation}
\label{eq:laplace_tranform_MIttagLeffler}
\mathcal{L}\bigg(\frac{t^{j\alpha+\beta-1}}{j!}E^{(j)}_{\alpha, \beta}(\mu t^\alpha)\bigg)(s)=\frac{s^{\alpha-\beta}}{(s^\alpha-\mu)^{j+1}}, \quad \text{for any } s>(\Re(\mu))^\frac{1}{\alpha},
\end{equation}
where $\displaystyle E^{(j)}_{\alpha, \beta}(z)=\left(\frac{d}{dz}\right)^{j} E_{\alpha, \beta}(z)$.
In the subsequent Lemma \ref{lem:integral_representation_MittagLeffler} we recall some useful properties of $E_{\alpha, \beta}(z)$ from \cite{Popov2013}.
\begin{lemma}
\label{lem:integral_representation_MittagLeffler}
Let $\alpha \in (0,1)$ and $\beta<1+\alpha$, and consider 
\begin{equation}\label{eq:integral_representation_MittagLeffler}
F_{\alpha, \beta}(z):= \frac{1}{\pi \alpha }\int_0^\infty \tau^{\frac{(1-\beta)}{\alpha}}e^{-\tau^\frac{1}{\alpha}}\frac{\tau \sin(\pi \beta)-z\sin(\pi(\beta-\alpha))}{\tau^2-2\tau z\cos(\pi \alpha)+z^2}\,d\tau,
\end{equation}
where the integral %in \eqref{eq:integral_representation_MittagLeffler} 
is understood in the principal value sense if $\arg(z)=\pm \pi\alpha$. Then,
the following representations hold:
\begin{align}
E_{\alpha, \beta}(z)&= F_{\alpha, \beta}(z), && \text{if} \quad  \alpha \pi <|\arg(z)|\leq \pi; \\
E_{\alpha, \beta}(z)&= F_{\alpha, \beta}(z) + \frac{1}{2\alpha}z^\frac{1-\beta}{\alpha}, && \text{if} \quad \arg(z)=\pm \pi \alpha; \\
E_{\alpha, \beta}(z)&= F_{\alpha, \beta}(z) + \frac{1}{\alpha}z^\frac{1-\beta}{\alpha}, && \text{if} \quad   |\arg(z)|< \pi \alpha.
\end{align}
\end{lemma}
\begin{lemma}
\label{lem:Falpha,beta}
For any $\alpha>0$ and $\beta<1+\alpha$ the function $F_{\alpha,\beta}=F_{\alpha,\beta}(z)$ defined by  \eqref{eq:integral_representation_MittagLeffler} satisfies 
\begin{equation}
\label{eq:Falpha,beta_estimate}
|F^{(k)}_{\alpha,\beta}(z)|\lesssim |z|^{-\frac{(\beta-1)_+}{\alpha}-k}\begin{cases}
(1+|z|)^{-1+\frac{(\beta-1)_+}{\alpha}}  \quad \text{if } \alpha\neq \beta,\\
(1+|z|)^{-2+\frac{(\beta-1)_+}{\alpha}}  \quad \text{if } \alpha= \beta,
\end{cases}
\end{equation}
for any $k\in \N_0$, where $F^{(k)}_{\alpha,\beta}(z)=d^k F_{\alpha,\beta}/d^kz$, uniformly with respect to $z\in \R_-$.
\end{lemma}
\begin{proof}
Setting $\tau^\frac{1}{\alpha}=\omega (-z)^\frac{1}{\alpha}$ (and then $\tau=-\omega^\alpha z $)we find
\begin{align*}
F_{\alpha, \beta}(z)&= \frac{1}{\pi \alpha }\int_0^\infty \tau^{\frac{(1-\beta)}{\alpha}}e^{-\tau^\frac{1}{\alpha}}\frac{\tau \sin(\pi \beta)-z\sin(\pi(\beta-\alpha))}{\tau^2-2\tau z\cos(\pi \alpha)+z^2}\,d\tau\\
&= (-z)^{\frac{1-\beta}{\alpha}}\frac{1}{\pi}\int_0^\infty e^{-\omega (-z)^\frac{1}{\alpha}}\frac{\omega^{2\alpha-\beta}\sin(\pi \beta)-\omega^{\alpha-\beta}\sin(\pi(\beta-\alpha)))}{\omega^{2\alpha}+2\omega^\alpha \cos(\pi \alpha)+1}\,d\omega.
\end{align*}
Namely,
\begin{equation}
\label{eq:Falpha,beta_representation}
\begin{aligned}
F_{\alpha,\beta}(z)&=\frac{ (-z)^{\frac{1-\beta}{\alpha}}}{\pi} \mathcal{L}(\psi_{\alpha,\beta})\left[(-z)^\frac{1}{\alpha}\right], \\ \psi_{\alpha,\beta}(\omega)&= \frac{\omega^{2\alpha-\beta}\sin(\pi \beta)-\omega^{\alpha-\beta}\sin(\pi(\beta-\alpha)))}{\omega^{2\alpha}+2\omega^\alpha \cos(\pi \alpha)+1}.
\end{aligned}
\end{equation}

We notice that $\psi:[0,+\infty)\to \R$ is continuous in $[0,+\infty)$ and  ($\mathcal{L}$-)transfor-mable with abscissa of convergence $\lambda_a(\psi)=0$ 
(see, e.g., \cite[Definition 4.1]{CGP2}).
%(see Definition \ref{def:LTfun} in Appendix \ref{subs:LTandpsidos}).
Indeed, it holds
\begin{equation}
\label{eq:approx_psi_infty}
\text{as } \omega \to +\infty, \quad \psi_{\alpha,\beta}(\omega)\sim \begin{cases}
 \omega^{-\beta} \quad &\text{if } \beta\neq 1 \\
\omega^{-\alpha-1} \quad &\text{if } \beta= 1 
\end{cases}, 
\end{equation}
and 
\begin{equation}
\label{eq:approx_psi_0}
\text{as } \omega \to 0^+, \quad \psi_{\alpha,\beta}(\omega)\sim \begin{cases}
\omega^{\alpha-\beta} \quad &\text{ if $\alpha\neq \beta$}\\
\omega^{2\alpha-\beta} \quad &\text{ if $\alpha=\beta$}
\end{cases}
\end{equation}
and so, for any $\varepsilon>0$ there exist $c, C$ positive constants such that for any $\lambda>0$ we may estimate
\begin{align*} 
\int_0^\infty & e^{-\lambda \omega}  | \psi_{\alpha,\beta}(\omega)|d\omega \\ 
&\leq (1+\varepsilon)\left(\int_0^c \omega^{\alpha-\beta}d\omega+\int_c^C e^{-\lambda \omega} | \psi_{\alpha,\beta}(\omega)|d\omega + C^{-\beta}\int_C^\infty e^{-\lambda \omega}d\omega\right);
\end{align*}
in particular, being $\beta<1+\alpha$ it follows immediately that $e^{-\lambda \omega} \psi_{\alpha,\beta}(\omega)\in L^1([0,+\infty)$.
%
%From Theorem \ref{thm:LTfunthma} 
We also know that $\mathcal{L}( \psi_{\alpha,\beta})$ is holomorphic in $\C_+:=\{z\in \C: \Re(z)> 0\}$ (see, e.g., \cite[Theorem 4.4]{CGP2}); 
additionally, for any $k\in \N$ the function $\omega^k  \psi_{\alpha,\beta}(\omega)$ is 
($\mathcal{L}$-)transformable with abscissa of convergence equal to 0; in particular, it holds  
\begin{equation}
\label{eq:Laplace_derivatives_Lemma}
\frac{d^k}{ds^k}\cL (\psi_{\alpha,\beta})(s)=(-1)^k\cL (\omega^k \psi_{\alpha,\beta} )(s)
\end{equation}
for all $s\in \C$ with $\Re s>0$. Then, for $s$ which tends to $0$ two dimensionally 
in the angular region $|\arg(s)|\leq \tilde\theta<\pi/2$, the application of 
\cite[Lemma 4.19]{CGP2} and \cite[Lemma 4.20]{CGP2},
%Lemma \ref{lem:Watson_0} and Lemma \ref{lem:Watson_0<-1} 
together with the estimate \eqref{eq:approx_psi_infty}, allows to conclude, for any $k\in \N$,
\begin{equation}
\label{eq:Lpsi_estimate_small}
\left| \frac{d^k}{ds^k}\cL (\psi_{\alpha,\beta})(s)\right|\leq C_k\begin{cases}
 s^{-(k-\beta+1)_+} \quad &\text{if } \beta\neq 1 \\
s^{-(k-\alpha)_+} \quad &\text{if } \beta= 1 
\end{cases}, 
\end{equation}
for some constant $C_k>0$ depending only on $\alpha>0$ and $\beta<1+\alpha$. \\
Whereas, if $s$ tends to $\infty$ two dimensionally in the angular region $|\arg(s)|\leq \theta<\pi/2$, then the application of 
%Lemma \ref{lem:Watson_infty} 
\cite[Lemma 4.21]{CGP2}, 
together with the estimate \eqref{eq:approx_psi_infty}, allows to obtain 
\begin{equation}
\label{eq:Lpsi_estimate_large}
\left| \frac{d^k}{ds^k}\cL (\psi_{\alpha,\beta})(s)\right|\leq C'_k
\begin{cases}
s^{-k-\alpha+\beta-1} \quad & \text{ if $\alpha\neq \beta$}\\
s^{-k-2\alpha+\beta-1} \quad &\text{ if $\alpha=\beta$}
\end{cases} 
\end{equation}
for some constant $C'_k>0$ depending only on $\alpha>0$ and $\beta<1+\alpha$. 
Estimates \eqref{eq:Lpsi_estimate_small} and \eqref{eq:Lpsi_estimate_large} allow to derive the desired estimate \eqref{eq:Falpha,beta_estimate} for $|F^{(k)}_{\alpha,\beta}(z)|$ for any $k\in \N_0$. 
Indeed, if $k=0$ from representation \eqref{eq:Falpha,beta_representation} and estimate \eqref{eq:Lpsi_estimate_small} one obtains:
\begin{equation*}
|F_{\alpha,\beta}(z)|\lesssim 
\begin{cases} 
1 \quad & \text{ if } \beta\leq 1, \\
|z|^{\frac{1-\beta}{\alpha}} \quad & \text{ if } \beta> 1,
\end{cases}
\end{equation*}
as $|z|\to 0$ in $\R_-$; whereas, by \eqref{eq:Lpsi_estimate_large} we may estimate 
\begin{equation*}
|F_{\alpha,\beta}(z)|\lesssim \begin{cases}
|z|^{-1} \quad \text{if } \alpha\neq \beta,\\
|z|^{-2} \quad \text{if } \alpha= \beta,
\end{cases}
\end{equation*}
as $|z|\to +\infty$ in $\R_-$. Finally, we get 
\begin{equation*}
|F_{\alpha,\beta}(z)|\lesssim |z|^{-\frac{(\beta-1)_+}{\alpha}}\begin{cases}
(1+|z|)^{-1+\frac{(\beta-1)_+}{\alpha}}  \quad \text{if } \alpha\neq \beta,\\
(1+|z|)^{-2+\frac{(\beta-1)_+}{\alpha}}  \quad \text{if } \alpha= \beta,
\end{cases}
\end{equation*}
for any $z\in \R_-$.
Let us suppose $k\geq 1$; for any $j\in \{1,\dots, k\}$ it holds
\begin{equation*}
\frac{d^j}{dz^j}\left(\cL (\psi_{\alpha,\beta})\left[(-z)^\frac{1}{\alpha}\right]\right)=\sum_{i=1}^jC_{i,\alpha} \left(\frac{d^i}{ds^i}\cL (\psi_{\alpha,\beta})\right)\left[(-z)^\frac{1}{\alpha}\right](-z)^{\frac{i}{\alpha}-j},
\end{equation*}
for suitable $C_{i,\alpha}\in \R$ independent of $z$, and 
\begin{equation*}
\left|\frac{d^{k-j}}{d^{k-j}}(-z)^{\frac{1-\beta}{\alpha}}\right| \leq \tilde{C}_{k,\alpha} |z|^{\frac{1-\beta}{\alpha}-k+j}
\end{equation*}
for some $\tilde{C}_{k,\alpha}>0$ independent of $z$. Then, the application of Leibniz formula together with estimates \eqref{eq:Lpsi_estimate_small} and \eqref{eq:Lpsi_estimate_large} allows to get 
\begin{equation*}
|F^{(k)}_{\alpha,\beta}(z)|\lesssim\begin{cases} 
|z|^{-k} \quad & \text{ if } \beta\leq 1, \\
|z|^{\frac{1-\beta}{\alpha}-k} \quad & \text{ if } \beta> 1,
\end{cases}
\end{equation*}
as $|z|\to 0$ in $\R_-$, and 
\begin{equation*}
|F^{(k)}_{\alpha,\beta}(z)|\lesssim 
\begin{cases}
|z|^{-1-k}\quad \text{if } \alpha\neq \beta,\\
|z|^{-2-k}  \quad \text{if } \alpha= \beta,
\end{cases}
\end{equation*}
as $|z|\to +\infty$ in $\R_-$. Finally, for any $k\in \N_0$ and $z\in \R_-$ we can conclude the desired estimate \eqref{eq:Falpha,beta_estimate}.
\end{proof}

%%%%%%%%%%%%%%%%%% REFERENCES: %%%%%%%%%%%%%%%%%%%%%%%%%%%%%%%%%%%%%%%%%%%%
%% BibTeX users: please use \bibliographystyle{spmpsci} %% for math. and phys. sci.
%% Non-BibTeX users: please use the model as below !!! %%

%%%% for FCAA - pls. include directly the Refs items here ! %%%
%%%% following STRICTLY the models below %%%%%
%%%% and ARRANGE the items in ALPHABETIC ORDER for authors' family names !!!

 %%%%%%%%%%%%%%%%%%%%%%%%%%%%%%

\bigskip  %%%%%%%%%%%%%%%%%%%%%%%%%%%%%%%%%

%\small %%%
%\noindent
%{\bf Publisher's Note}
%Springer Nature remains neutral with regard to jurisdictional claims in published maps and institutional affiliations.


\begin{thebibliography} {99}
 \normalsize %% to be better seen by reviewers, until final proofs ... %%

% Format for Journal Reference
\bibitem{AgrMar89}
Agranovich, M.S., Markus, A.S.:
On spectral properties of elliptic pseudo-differential operators far from self-adjoint ones.
{ Zeitschrift f\"ur Analysis und ihre Anwendungen} \textbf{8}(3), 237--260 (1989)

\bibitem{AJP}
Atanackovic, T., Janev, M., Pilipovic, S.:
On the thermodynamical restrictions in isothermal deformations of fractional Burgers model.
{Philos. Trans. Roy. Soc. A} \textbf{378}(2172), 20190278 (2020)

\bibitem{AMP}
Atanackovic, T., Merkle, A., Pilipovic, S.:
Heat conduction with distributed order fractional derivative perturbed by a stochastic process.
Preprint (2024)

\bibitem{ANPR}
Atanackovic, T., Nedeljkov, M., Pilipovic, S., Rajter, D.:
Dynamics of a fractional derivative type of a viscoelastic rod with random excitation.
{Fract. Calc. Appl. Anal.} \textbf{18}(5), 1232--1251 (2015). DOI: https://doi.org/10.1515/fca-2015-0071

\bibitem{APS}
Atanackovic, T., Pilipovic, S., Selesi, D.:
Wave propagation dynamics in a fractional Zener model with stochastic excitation.
{Fract. Calc. Appl. Anal.} \textbf{23}(6), 1570--1604 (2020). DOI: 
https://doi.org/10.1515/fca-2020-0079

\bibitem{Cord}
Cordes, H.O.:
The Techniques of Pseudodifferential Operators.
Cambridge University Press, Cambridge (1995)

\bibitem{CD21}
Coriasco, S., Doll, M.:
Weyl law on asymptotically Euclidean manifolds.
{Ann. Henri Poincar\'e} \textbf{22}, 447--486 (2021)

\bibitem{CGP2}
Coriasco, S., Girardi, G., Pilipovi\' c, S.: \textit{A parametrix construction for time-fractional partial differential equations}.
Preprint (2026).

\bibitem{CJT13b}
Coriasco, S., Johansson, K., Toft, J.:
Global wave-front sets of Banach, Fr\'echet and modulation space types, and pseudo-differential operators.
{J. Differential Equations} \textbf{254}(8), 3228--3258 (2013)

\bibitem{CJT16}
Coriasco, S., Johansson, K., Toft, J.:
Global wave-front properties for Fourier integral operators and hyperbolic problems.
{J. Fourier Anal. Appl.} \textbf{22}(2), 285--333 (2016)

\bibitem{DA2019}
D'Abbicco, M.:
Critical Exponents for Differential Inequalities with Riemann--Liouville and Caputo Fractional Derivatives.
In: D'Abbicco, M., Ebert, M.R., Georgiev, V., Ozawa, T. (eds.)
New Tools for Nonlinear PDEs and Application, pp. 49--95.
Birkh\"auser, Basel (2019)

\bibitem{DAEP2019}
D'Abbicco, M., Ebert, M.R., Picon, T.:
The critical exponent(s) for the semilinear fractional diffusive equation.
{J. Fourier Anal. Appl.} \textbf{25}, 696--731 (2019)

\bibitem{DAG2022}
D'Abbicco, M., Girardi, G.:
Asymptotic profile for a two-terms time fractional diffusion problem.
{Fract. Calc. Appl. Anal.} \textbf{25}, 1199--1228 (2022). DOI: https://doi.org/10.1007/s13540-022-00041-3

\bibitem{DAG2023}
D'Abbicco, M., Girardi, G.:
Decay estimates for a perturbed two-terms space-time fractional diffusive problem.
{Evolution Equations Control Theory} \textbf{12}(4), 1056--1082 (2023).


\bibitem{Epstein}
Epstein, C.L.:
The spectrum and pseudospectrum of non-selfadjoint pseudodifferential operators.
{Pure Appl. Math. Q.} \textbf{6}(3), 815--827 (2010)

\bibitem{Garrappa-Mainardi2025}
Biolek, D., Garrappa, R., Mainardi, F., et al.:
Derivatives of Mittag-Leffler functions: theory, computation and applications.
{Nonlinear Dyn.} (2025)

\bibitem{GIL2000}
Gorenflo, R., Iskenderov, A., Luchko, Y.:
Mapping between solutions of fractional diffusion-wave equations.
{Fract. Calc. Appl. Anal.} \textbf{3}, 75--86 (2000)

\bibitem{GLU2000}
Gorenflo, R., Luchko, Y., Umarov, S.:
The Cauchy and multi-point partial pseudo-differential equations of fractional order.
{Fract. Calc. Appl. Anal.} \textbf{3}, 249--275 (2000)

\bibitem{GMSR2021}
Gorenflo, R., Mainardi, F., Scalas, E., Raberto, M.:
Fractional Calculus and Continuous-Time Finance III: The Diffusion Limit.
In: Kohlmann, M., Tang, S. (eds.)
Mathematical Finance.
Trends in Mathematics.
Birkh\"auser, Basel (2001)

\bibitem{JP}
Jaksic, S., Prangoski, B.:
Extension theorem of Whitney type for $\mathcal{S}(\mathbb{R}^d_+)$ by the use of Kernel theorem.
{Publ. Inst. Math. (Beograd)} \textbf{99}, 59--65 (2016)

\bibitem{Kilbas}
Kilbas, A.A., Srivastava, H.M., Trujillo, J.J.:
Theory and Applications of Fractional Differential Equations.
Elsevier, Amsterdam (2006)

\bibitem{M}
Mainardi, F.:
On Some Properties of Mittag-Leffler Function $E_\alpha(-t^\alpha)$, Complete monotone for $t>0$ with $0<\alpha<1$.
{Discrete Contin. Dyn. Syst.} \textbf{19}, 2267--2278 (2014)

\bibitem{MainardiBook}
Mainardi, F.:
Fractional Calculus and Waves in Linear Viscoelasticity.
World Scientific, Singapore (2022)

\bibitem{MSS06}
Maniccia, L., Schrohe, E., Seiler, J.:
Complex powers of classical SG-pseudodifferential operators.
{Ann. Univ. Ferrara} \textbf{52}, 353--369 (2006)

\bibitem{ME}
Melrose, R.:
Geometric Scattering Theory.
Cambridge University Press, Cambridge (1995)

\bibitem{Nigmatullin1984}
Nigmatullin, R.R.:
To the Theoretical Explanation of the Universal Response.
{Phys. Status Solidi B} \textbf{123}, 739--745 (1984)

\bibitem{NiRo}
Nicola, F., Rodino, L.:
Global Pseudo-Differential Calculus on Euclidean Spaces.
Birkh\"auser, Basel (2010)

\bibitem{Popov2013} Popov, A. Yu., Sedletskii, A. M.: Distribution of roots of Mittag-Leffler functions. {J. Math. Sci.} \textbf{190}, 209--409 (2013)

\bibitem{Schneider89}
Schneider, W.R., Wyss, W.:
Fractional diffusion and wave equations.
{J. Math. Phys.} \textbf{30}(1), 134--144 (1989)

\bibitem{Schro}
Schrohe, E.:
Spaces of weighted symbols and weighted Sobolev spaces on manifolds.
In: Cordes, H.O., Gramsch, B., Widom, H. (eds.)
Pseudodifferential Operators.
Lecture Notes in Mathematics, vol. 1256, pp. 360--377.
Springer, Berlin (1987)

\bibitem{Seeley67}
Seeley, R.:
Complex powers of an elliptic operator.
In: Proc. Sympos. Pure Math., vol. 10, pp. 288--307.
American Mathematical Society (1967)

\bibitem{StRay}
Saint Raymond, X.:
Elementary Introduction to the Theory of Pseudodifferential Operators.
CRC Press, Boca Raton (1991)

\bibitem{T}
Tr\`eves, F.:
Topological Vector Spaces, Distributions and Kernels.
Academic Press, New York (1967)

\bibitem{V}
Vladimirov, V.S.:
Generalized Functions in Mathematical Physics.
Mir Publishers, Moscow (1979)

%%%%%%%%%%%%%%%%%%%%%%%%%%%%%%%%%%%%%%%%%%%%%%%%%%%%

\end{thebibliography}
\end{document}